\date{}
\newcommand{\re}{\mathbb{R}}
\newcommand{\ep}{\varepsilon}
\newcommand{\Fom}{\mathscr{F}_\omega}
\newcommand{\Leb}{\ensuremath{\mathscr{L}}}
\newcommand{\ap}{\mathrm{ap}\,\mbox{--}\,}
\newcommand{\apliminf}{\ap\liminf}
\newcommand{\aplimsup}{\ap\limsup}
\newcommand{\hatK}{\widehat{K}}
\newcommand{\reg}{\operatorname{Reg}}
\newtheorem{thm}{Theorem}[section]
\newtheorem{thmbibl}{Theorem}
\newtheorem{rmk}[thm]{Remark}
\newtheorem{prop}[thm]{Proposition}
\newtheorem{cor}[thm]{Corollary}
\newtheorem{lemma}[thm]{Lemma}
\newtheorem{newopen}{Open problem}
\newtheorem{question}[thmbibl]{Question}
\title{On the characterization of constant functions through nonlocal functionals}
\author{
Massimo Gobbino\vspace{1ex}\\ 
{\normalsize Università degli Studi di Pisa} \\
{\normalsize Dipartimento di Matematica}\\ 
{\normalsize PISA (Italy)}\\  
{\normalsize e-mail: \texttt{massimo.gobbino@unipi.it}}
\and
Nicola Picenni\vspace{1ex}\\ 
{\normalsize Scuola Normale Superiore} \\
{\normalsize PISA (Italy)}\\
{\normalsize e-mail: \texttt{nicola.picenni@sns.it}}
}
\begin{document}
\maketitle

\begin{abstract}

We address a classical open question by H.~Brezis and R.~Ignat concerning the characterization of constant functions through double integrals that involve difference quotients. 

Our first result is a counterexample to the question in its full generality. This counterexample requires the construction of a function whose difference quotients avoid a sequence of intervals with endpoints that diverge to infinity. Our second result is a positive answer to the question when restricted either to functions that are bounded and approximately differentiable almost everywhere, or to functions with bounded variation.

We also present some related open problems that are motivated by our positive and negative results.

\vspace{6ex}

\noindent{\bf Mathematics Subject Classification 2010 (MSC2010):} 
26A30, 26A45, 28A50.

		
\vspace{6ex}

\noindent{\bf Key words:} 
difference quotient, constant functions, nonlocal functional, Cantor set, bounded variation functions, approximate differentiability, disintegration of measures.

\end{abstract}

 
\section{Introduction}

In this paper we consider the problem of characterizing constant functions by means of nonlocal functionals depending on double integrals of difference quotients. The starting point is the following result obtained by H.~Brezis in~\cite{2002-brezis} as a by-product of the theory developed in~\cite{2001-BouBreMir} (see also~\cite{2008-mariconda,2020-Houston-Ranjbar} for alternative proofs, and~\cite{2004-ManuMat-Pietruska,2012-JMAA-Ranjbar} for an extension to metric-measure spaces).

\begin{thmbibl}[{see~\cite[Proposition~2]{2002-brezis}}]

Let $d$ be a positive integer, let $\Omega\subseteq\re^{d}$ be a connected open set, and let $p\geq 1$ be a real number.

Then a measurable function $u:\Omega\to\re$ is (essentially) constant in $\Omega$ if and only if
\begin{equation}
\iint_{\Omega\times\Omega}\frac{|f(y)-f(x)|^{p}}{|y-x|^{p}}\cdot\frac{1}{|y-x|^{d}}\,dx\,dy<+\infty.
\nonumber
\end{equation}

\end{thmbibl}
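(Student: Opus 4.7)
The ``only if'' direction is immediate: if $f$ is essentially constant then $|f(y)-f(x)|=0$ almost everywhere, so the integrand vanishes. For the converse, my plan is to route the hypothesis through the Bourgain--Brezis--Mironescu asymptotic formula, which is the central result of the paper 2001-BouBreMir already invoked in the excerpt. Two preliminary reductions come first. Since $\Omega$ is connected and open, it suffices to prove that $f$ is essentially constant on every open ball $B\subset\subset\Omega$, because a chaining argument then propagates the common value throughout $\Omega$. Moreover, replacing $f$ by $\arctan f$ (a bounded $1$-Lipschitz composition that does not increase the double integral) reduces to the case $f\in L^{\infty}(\Omega)\subset L^{p}_{\mathrm{loc}}(\Omega)$, and constancy of $\arctan f$ yields constancy of $f$.

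The core step is a uniform bound for the sub-critical Gagliardo seminorms on a fixed ball $B$. Using the pointwise inequality $|y-x|^{p(1-s)}\leq \max\bigl(1,(\operatorname{diam} B)^{p(1-s)}\bigr)$ on $B\times B$ for $s\in(0,1)$, I would write
$$(1-s)\iint_{B\times B}\frac{|f(y)-f(x)|^{p}}{|y-x|^{d+sp}}\,dx\,dy \leq (1-s)\max\bigl(1,(\operatorname{diam} B)^{p(1-s)}\bigr)\iint_{B\times B}\frac{|f(y)-f(x)|^{p}}{|y-x|^{d+p}}\,dx\,dy.$$
The right-hand side tends to $0$ as $s\to 1^{-}$ because $(1-s)\to 0$ while the remaining factors stay bounded and the double integral is finite by hypothesis (the $B\times B$ integral is dominated by the $\Omega\times\Omega$ integral since the integrand is non-negative).

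I would then invoke the BBM theorem: whenever the $\liminf_{s\to 1^{-}}$ of $(1-s)$ times the $s$-fractional Gagliardo seminorm is finite, $f$ belongs to $W^{1,p}(B)$ for $p>1$ (respectively to $BV(B)$ when $p=1$), and the limit equals $K_{d,p}\int_{B}|\nabla f|^{p}\,dx$ (respectively a constant multiple of $|Df|(B)$). Since the previous step shows this limit is zero, the gradient of $f$ vanishes on $B$, so $f$ is constant on $B$ by connectedness, and hence on $\Omega$.

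The main obstacle in this plan is ensuring that the converse direction of the BBM theorem applies to a merely measurable function satisfying only the integral hypothesis; this is exactly what the $\arctan$ reduction is designed to handle, since it converts the problem into an $L^{\infty}$ setting where the pertinent BBM statement is classically available, and the $1$-Lipschitz property guarantees that the finiteness hypothesis is preserved.
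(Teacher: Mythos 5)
Your argument is correct, and it follows the same route that the paper attributes to Brezis~\cite{2002-brezis}: the paper states this as a cited background result obtained ``as a by-product of the theory developed in~\cite{2001-BouBreMir}'' and does not reprove it, and your plan (truncation to $L^{\infty}$ via a $1$-Lipschitz bijection, domination of the $(1-s)$-weighted $W^{s,p}$ Gagliardo seminorms by the hypothesis integral, and then the BBM converse together with the limit identity to conclude $\nabla f=0$, resp.\ $|Df|=0$) is precisely that BBM-based argument. One could add that the finiteness of the double integral already forces $f\in L^{p}_{\mathrm{loc}}$ without the $\arctan$ step, but the truncation is harmless and makes the appeal to the BBM converse cleaner.
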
  

In the same paper, H.~Brezis suggested to extend the result by investigating more general functionals of the form
\begin{equation}
\Fom (u,\Omega):=\iint_{\Omega\times\Omega}\omega \left(\frac{|u(y)-u(x)|}{|y-x|}\right) \frac{1}{|y-x|^d} \,dx\, dy,
\label{defn:F-omega}
\end{equation}
where $\omega$ belongs to the class $\mathcal{W}$ of all continuous functions $\omega :[0,+\infty) \to [0,+\infty)$ such that $\omega (0)=0$ and 
\begin{equation}
\omega (\mu)>0
\qquad
\forall\mu>0.
\label{hp:omega>0}
\end{equation}

\paragraph{\textmd{\emph{Previous literature}}}

The functionals of the form (\ref{defn:F-omega}) have been considered by R.~Ignat in~\cite{2005-ignat}, where the following question is addressed. 

\begin{question}[{see~\cite[Problem~1]{2005-ignat}}]\label{Q:ignat}

Find a necessary and sufficient condition on $\omega\in\mathcal{W}$ so that for every positive integer $d$, every connected open set $\Omega\subseteq\re^{d}$, and every measurable function $u:\Omega\to\re$ it turns out that
\begin{equation}
\Fom(u,\Omega)<+\infty \iff u \mbox{ is essentially constant in $\Omega$}.
\label{implication}
\end{equation}

\end{question}

As observed in \cite{2005-ignat}, the class $\mathcal{W}$ is the natural minimal setting for Question~\ref{Q:ignat}, because the continuity of $\omega$ guarantees that $\Fom(u,\Omega)$ is well-defined for every measurable function $u$, assumption $\omega(0)=0$ guarantees that $\Fom(u,\Omega)$ is finite (and actually equal to~0) when $u$ is a constant function, while the positivity of $\omega(\mu)$ for positive values of $\mu$ guarantees that $\Fom(u,\Omega)=+\infty$ for every Lipschitz continuous function $u$ that is non-constant.

Several partial results were proved in \cite{2005-ignat}. Let us mention some of them. 
\begin{itemize}

\item  (Positive answer with stronger assumptions on $u$, see~\cite[Theorem~1.5]{2005-ignat}). For every $\omega\in\mathcal{W}$, implication (\ref{implication}) holds true for every $u\in W^{1,1}(\Omega)$.

\item  (Necessary integral condition on $\omega$, see~\cite[Theorem~1.2]{2005-ignat}). If implication (\ref{implication}) holds true for every measurable function $u$, then necessarily $\omega$ satisfies the integral condition
\begin{equation}
\label{hp:omega-int}
\int_1 ^{+\infty} \frac{\omega(\mu)}{\mu^2} \, d\mu =+\infty.
\end{equation}

To this end, it is enough to consider the case where $d=1$, $\Omega=(-1,1)$, and $u$ is the Heaviside function equal to~0 in $(-1,0)$ and equal to~1 in $(0,1)$. We point out that this function belongs to $BV((-1,1))$, but not to $W^{1,1}((-1,1))$.

\item  (Positive answer with growth condition on $\omega$, see~\cite[Theorem~1.3]{2005-ignat}). If $\omega\in\mathcal{W}$, and in addition $\omega$ satisfies the growth condition
\begin{equation}
\liminf_{\mu\to +\infty}\frac{\omega(\mu)}{\mu}>0,
\label{hp:omega-liminf}
\end{equation}
then implication (\ref{implication}) holds true for every measurable function $u$.

\item  (Negative answer for slower growth, see~\cite[Theorem~1.7]{2005-ignat}). If $\omega(\mu)=\mu^{\theta}$ for some $\theta\in(0,1)$, then there are examples where implication (\ref{implication}) fails, even in dimension one and for functions $u$ that are H\"older continuous and with bounded variation (but of course not in $W^{1,1}$).

\end{itemize}

The results quoted above led R.~Ignat to ask whether the integral condition (\ref{hp:omega-int}), in addition to (\ref{hp:omega>0}), is also sufficient for implication (\ref{implication}) to be true.

\begin{question}[{see~\cite[Open Question~1]{2005-ignat}}]\label{open:ignat}

Let us assume that $\omega\in\mathcal{W}$ satisfies the integral condition (\ref{hp:omega-int}). 

Determine whether implication (\ref{implication}) holds true for every positive integer $d$, every connected open set $\Omega\subseteq\re^{d}$, and every measurable function $u:\Omega\to\re$.

\end{question}

\paragraph{\textmd{\emph{Our results and new open problems}}}

In this paper we present two main results. The first one is a negative answer to Question~\ref{open:ignat} in its full generality. 

\begin{thm}[Counterexample to Question~\ref{open:ignat}]\label{thm:counterexample}

There exist a continuous function $\omega:[0,+\infty)\to[0,+\infty)$ that satisfies (\ref{hp:omega>0}) and (\ref{hp:omega-int}), and a non-constant measurable function $u:(0,1)\to\re$ such that $\Fom(u,(0,1))<+\infty$. 

In addition, one can assume that $\omega$ satisfies the growth condition
\begin{equation}
\lim_{\mu\to+\infty}\frac{\omega(\mu)}{(\log\mu)^{\theta}}=+\infty
\qquad
\forall\theta>0,
\label{th:growth-omega}
\end{equation}
and that the derivative of $u$ exists and vanishes for almost every $x\in(0,1)$.
\end{thm}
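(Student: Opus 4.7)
The overall strategy is to decouple $u$ from $\omega$ by making their interaction sparse. I construct $u$ so that its nonzero difference quotients are ``quantized'' to lie in a rapidly separating sequence of thin windows $W_n\subset(0,+\infty)$ around values $M_n\to+\infty$, and then engineer $\omega$ so that the divergence required in (\ref{hp:omega-int}) is produced entirely by tall tents supported on the complementary intervals $I_n:=(\sup W_n,\inf W_{n+1})$, which the difference quotients of $u$ never visit. Making $\omega$ as large as one wishes on each $I_n$ does not affect $\Fom(u,(0,1))$, because $\omega$ is only evaluated at the difference quotients of $u$.

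For $u$, I would iterate a devil's-staircase-type construction on super-exponentially decreasing scales $\lambda_n\to 0$, with vertical amplitudes $h_n$ chosen so that $M_n:=h_n/\lambda_n\to+\infty$ and with arbitrarily large consecutive ratios $M_{n+1}/M_n$. At step $n$, on each of a controlled family of intervals of length $\lambda_n$, I replace $u_n$ by a Cantor-type staircase of horizontal span $\lambda_n$ and vertical oscillation $h_n$, alternating the sign of the oscillation across successive intervals to force the total variation of $u_n$ to diverge while keeping the sequence uniformly Cauchy. The limit $u$ is continuous, non-constant, satisfies $u'=0$ off a null set, and is not of bounded variation, so that the positive BV result of the paper cannot be applied against the counterexample. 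The key geometric lemma I would then establish is that for every $x\ne y$, the ratio $|u(y)-u(x)|/|y-x|$ either vanishes (when $[x,y]$ lies in a maximal interval of constancy of $u$) or belongs to some window $W_n:=[M_n/K,KM_n]$ for an absolute constant $K$. The argument matches the scale $|y-x|\sim\lambda_n$ with the level $n$, controls the contribution of deeper levels $k>n$ by a geometrically telescoping sum thanks to the rapid decay of $\lambda_k$, and uses the piecewise constancy of $u_k$ on scales $\gg\lambda_n$ to discard coarser levels. Choosing the $M_n$ so that $KM_n<M_{n+1}/K$, the intervals $I_n:=(KM_n,M_{n+1}/K)$ are pairwise disjoint, avoided by every difference quotient of $u$, and their endpoints both diverge to $+\infty$.

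For $\omega$, fix a continuous background $g\in\mathcal{W}$ with $g(\mu)/(\log\mu)^\theta\to+\infty$ for every $\theta>0$, chosen slow enough that $\sum_n g(KM_n)\log(\lambda_{n-1}/\lambda_n)$ converges; this is possible thanks to the super-exponential decay of $\lambda_n$. Set $\omega:=g$ outside $\bigcup_n I_n$, and inside each $I_n$ glue in a continuous tent of height large enough that $\int_{I_n}\omega(\mu)/\mu^2\,d\mu\geq 1$. Then $\omega\in\mathcal{W}$, $\int_1^{+\infty}\omega(\mu)/\mu^2\,d\mu=+\infty$, and $\omega\geq g$ yields (\ref{th:growth-omega}). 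Splitting the double integral $\Fom(u,(0,1))$ according to which window $W_n$ contains the difference quotient of the pair $(x,y)$, and exploiting $\omega\equiv g$ on each $W_n$, gives
\[
\Fom(u,(0,1))\leq C\sum_{n=1}^{\infty} g(KM_n)\log(\lambda_{n-1}/\lambda_n)<+\infty.
\]
The main obstacle I anticipate lies in the quantization lemma for the difference quotients: alternating-sign staircases can exhibit subtle constructive interferences between levels, and a naive choice of the scales and of the refinement pattern would allow pairs $(x,y)$ to produce difference quotients falling inside a forbidden interval $I_n$. The technical heart of the argument is therefore to choose the $\lambda_n$ fast enough to kill every cross-scale interference, yet not so fast as to preclude the simultaneous realization of (\ref{th:growth-omega}) and of the summability condition on $g$ used above.
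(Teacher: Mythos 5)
Your proposal has a fundamental obstruction that makes it impossible to carry out, independently of the choice of scales and amplitudes: the function $u$ you construct is \emph{bounded}, and this is incompatible with the conclusion you are trying to reach.

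Indeed, you build $u$ as the uniform limit of a Cauchy sequence of continuous staircase-type functions on a bounded interval, and you claim that $u$ is continuous with $u'(x)=0$ for almost every $x\in(0,1)$. Hence $u\in L^{\infty}((0,1))$ and $u$ is (approximately) differentiable almost everywhere. But then Theorem~\ref{thm:AD} of this very paper forces $\Fom(u,(0,1))=+\infty$ for every non-constant such $u$ and every $\omega$ satisfying (\ref{hp:omega>0}) and (\ref{hp:omega-int}); alternatively, since your $\omega$ satisfies (\ref{th:growth-omega}) and hence $\omega(\mu)\to+\infty$, already Corollary~\ref{cor:limit} applies. You were careful to make $u$ non-$BV$ so that Corollary~\ref{cor:BV} would not apply, but you overlooked that $u'=0$ almost everywhere plus boundedness puts $u$ squarely in the scope of Theorem~\ref{thm:AD}. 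So no choice of $\lambda_n$, $h_n$, or background $g$ can rescue the construction.

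Where does your argument break down concretely? It breaks at the ``quantization lemma.'' Since your $u$ is continuous with $u'(x)=0$ a.e., for a.e.\ fixed $x$ the map $y\mapsto\bigl(u(y)-u(x)\bigr)/(y-x)$ is continuous on $(x,1)$ and tends to $0$ as $y\to x^{+}$; whenever it attains some value $V$ for some $y$, by the intermediate value theorem it also attains every value in $(0,V)$, and in particular it crosses all the forbidden intervals $I_n$ below $V$. So the difference quotients of $u$ \emph{cannot} avoid the $I_n$. One might hope that these crossings are negligible, but Theorem~\ref{thm:loc-vs-glob} makes precise that they are not: for $\mu$ large, the $1/(y-x)$--weighted measure of pairs $(x,y)$ whose difference quotient falls in $[\mu-\delta,\mu]$ has $\liminf$ density at least $c_0/\mu^2$, and any $\omega$ with divergent $\int\omega(\mu)/\mu^2\,d\mu$ then makes $\Fom(u,(0,1))$ diverge, no matter how sparse the support of the tall tents.

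The paper avoids this trap precisely because its counterexample $u$ is \emph{unbounded and discontinuous}. It takes the constant value $k_n$ (with $k_n\to+\infty$) on the $n$-th open set $A_n$ removed in the construction of the middle-third Cantor set, and vanishes on the Cantor set itself. Because $u$ is locally constant away from a null set and jumps between well-separated levels $k_n$, any pair $(x,y)\in A_i\times A_j$ with $i\neq j$ gives a difference quotient that is bounded \emph{both above and below}, in fact confined to $[\mu_{j-1}+3,\mu_j]$; one then places the large oscillations of $\omega$ on the gaps $[\mu_i+1,\mu_i+2]$, which the difference quotients never visit. Your general strategy --- quantize the difference quotients and make $\omega$ large only off the quantization windows --- is exactly the right idea, but it must be implemented with a jump function whose range escapes to infinity; a bounded continuous staircase runs into the intermediate-value obstruction and is ruled out by the paper's own positive result.
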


\begin{rmk}
\begin{em}

The space dimension plays no role in the construction of the counterexample. Indeed, one can check that the same argument works in the hypercube $(0,1)^{d}$ by just considering a function $u$ that depends only on a single coordinate.

\end{em}
\end{rmk}

The second result of this paper is a positive answer to Question~\ref{open:ignat} under the additional assumption that $u$ is of class $L^{\infty}$ and approximately differentiable almost everywhere in the sense of~\cite[section~3.1.2]{federer:GMT}.

\begin{thm}[Approximately differentiable bounded functions]\label{thm:AD}

Let $\omega:[0,+\infty)\to[0,+\infty)$ be a continuous function that satisfies (\ref{hp:omega>0}) and (\ref{hp:omega-int}). Let $\Omega\subseteq\re^{d}$ be a connected open set, and let $u\in L^{\infty}(\Omega)$ be a non-constant function that is approximately differentiable at almost every $x\in\Omega$.

Then it turns out that $\Fom(u,\Omega)=+\infty$. 

\end{thm}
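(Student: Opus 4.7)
The proof strategy is to reduce the statement to a one-dimensional claim via slicing, and then to prove the one-dimensional statement by combining Federer's structure theorem for approximately differentiable functions with a multi-scale estimate based on the integral condition (\ref{hp:omega-int}).

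\textbf{Slicing reduction.} Using $h := y - x$ and polar coordinates in $h$, Fubini yields the identity
\[
\Fom(u,\Omega) = \frac{1}{2}\int_{S^{d-1}} \int_{e^{\perp}} \Fom^{(1)}(v_{e,z}, I_{e,z}) \, dz\, d\sigma(e),
\]
where $v_{e,z}(t) := u(z + te)$, $I_{e,z} := \{t \in \re : z + te \in \Omega\}$, and $\Fom^{(1)}$ is the natural one-dimensional analogue of $\Fom$ (with $|\tau - \sigma|^d$ replaced by $|\tau - \sigma|$). Federer's theorem combined with Fubini shows that, for each direction $e$, the slice $v_{e,z}$ is bounded and $\Leb^1$-a.e.\ approximately differentiable for $\Leb^{d-1}$-a.e.\ $z \in e^\perp$; a covering-plus-connectedness argument then produces some direction $e$ for which $v_{e,z}$ is non-constant on a component of $I_{e,z}$ for a $\Leb^{d-1}$-positive set of $z$'s. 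Thus the theorem reduces to: if $v \in L^\infty(I)$ on an interval $I$ is $\Leb^1$-a.e.\ approximately differentiable and not essentially constant, then $\Fom^{(1)}(v, I) = +\infty$.

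\textbf{One-dimensional analysis.} We split into two sub-cases based on the approximate derivative of $v$. If this derivative is nonzero on a positive-measure set, then at a Lebesgue density point $x_0$ of that set the difference quotients $|v(\tau)-v(\sigma)|/|\tau-\sigma|$ are bounded below by a positive constant on a positive fraction of pairs in $(x_0 - r, x_0 + r)^2$ for every small $r$; together with (\ref{hp:omega>0}) and the non-integrability of $1/|\tau-\sigma|$ near the diagonal of $\re \times \re$, this forces $\Fom^{(1)}(v, I) = +\infty$ with no use of the integral condition. Otherwise the approximate derivative vanishes $\Leb^1$-a.e.\ on $I$ (the Cantor staircase is the prototypical example). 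In this case we pick $\alpha < \beta$ with $A := \{v < \alpha\}$ and $B := \{v > \beta\}$ both of positive measure, and exploit Federer's decomposition of $I$ into countably many Borel pieces on which $v$ coincides with a Lipschitz function (whose classical derivative vanishes a.e.\ on the coincidence set) to locate, at dyadic scales $r_n = 2^{-n}$ along a suitable subsequence $\mathcal{N} \subseteq \mathbb{N}$, configurations of adjacent intervals $(J_n^-, J_n^+)$ each of length $\sim r_n$ for which $A \cap J_n^-$ and $B \cap J_n^+$ both have measure $\gtrsim r_n$. Each such configuration contributes at least $r_n\,\omega((\beta-\alpha)/(2 r_n))$ to $\Fom^{(1)}(v,I)$; aggregating over $\mathcal{N}$ with an appropriate multiplicity $N_n$ at each scale, one reaches the dyadic comparison
\[
\sum_{n \in \mathcal{N}} N_n \, r_n \, \omega\!\left(\frac{\beta-\alpha}{2 r_n}\right) \;\asymp\; \int_1^{+\infty} \frac{\omega(\mu)}{\mu^2}\,d\mu,
\]
and concludes by the integral condition (\ref{hp:omega-int}).

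\textbf{Main obstacle.} The hardest step is the combinatorial construction of the subsequence $\mathcal{N}$, the configurations $(J_n^-, J_n^+)$, and the multiplicities $N_n$ that make the dyadic sum comparable to the divergent integral in (\ref{hp:omega-int}). A naive ``single transition point'' argument suffices for classical jumps but gives too weak a bound in fractal situations (like the Cantor staircase), where many configurations must be aggregated at each scale using the self-similar structure of the approximately flat regions of $v$. The counterexample of Theorem~\ref{thm:counterexample}---which is approximately differentiable with derivative zero a.e.\ but is not in $L^\infty$---shows that the boundedness hypothesis is essential: it prevents the difference quotients of $v$ from ``avoiding intervals'' in the sense exploited by that construction, and the argument must therefore use $v \in L^\infty$ in an essential way to carry through the aggregation above.
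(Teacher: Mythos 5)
The slicing reduction and the split into the two cases (approximate derivative nonzero on a positive-measure set vs.\ zero a.e.) both match the paper's outline, and your handling of the first case is essentially the paper's Theorem~\ref{thm:u'>0}. The second case is where the proposal has a genuine gap, and it is not merely the missing combinatorics you flag; the per-configuration bound you assert is false for the class of $\omega$ under consideration.

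Concretely, you claim that a configuration of adjacent intervals $J_n^{\pm}$ of length $\sim r_n$ with $\Leb(A\cap J_n^-)\gtrsim r_n$ and $\Leb(B\cap J_n^+)\gtrsim r_n$ contributes at least $r_n\,\omega\bigl((\beta-\alpha)/(2r_n)\bigr)$. That would follow if $\omega$ were nondecreasing, but under only (\ref{hp:omega>0}) and (\ref{hp:omega-int}) the difference quotient $(v(y)-v(x))/(y-x)$ for pairs in $(A\cap J_n^-)\times(B\cap J_n^+)$ is only bounded \emph{below} by $(\beta-\alpha)/(2r_n)$; it spreads over a whole range $[c_1/r_n,\,c_2/r_n]$ (with $c_2/c_1$ controlled by $\|v\|_\infty/(\beta-\alpha)$) and beyond. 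Nothing prevents $\omega$ from being tiny precisely on the ranges of $\mu$ that your chosen scales $r_n$ sample --- that is exactly what the oscillating $\omega$ in Theorem~\ref{thm:counterexample} does, and the set $\mathcal{N}$ of scales in your construction is dictated by the structure of $v$, not by where $\omega$ is large. So there is no way to make your dyadic sum comparable to $\int\omega(\mu)/\mu^2\,d\mu$ by adjusting $N_n$: the comparison would require a pointwise lower bound on $\omega$ that you do not have.

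What is actually needed --- and what the paper establishes via the function $\gamma(\mu)$ in (\ref{defn:gamma(mu)}) and the ``local expansion vs.\ global contraction'' theorem --- is that for \emph{every} $\mu>\mu_0$ the set of pairs with difference quotient in $[\mu-\delta,\mu]$ has lower density (as $\delta\to 0$) at least $c_0/\mu^2$. The double integral is then disintegrated with respect to the exact value of the difference quotient, and one integrates this density against $\omega(\mu)$, rather than plugging a lower bound for the quotient into $\omega$. Achieving the ``every $\mu$'' statement is where the real work lies: the paper introduces, for each $\mu$, the auxiliary function $\varphi_\mu(x)=x-u(x)/\mu$, which has approximate derivative $1$ a.e.\ and which maps $(a,b)$ into a strictly shorter interval because $u\in L^\infty$ and has distinct approximate limits at the endpoints; Theorem~\ref{thm:loc-vs-glob} then converts this ``local expansion, global contraction'' into the required density bound, using a cumulative-distribution/disintegration argument together with the combinatorial Lemma~\ref{lemma:olimpico}. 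Your proposal contains no mechanism of this kind, and the discussion of boundedness in your final paragraph, while correctly locating $L^\infty$ as essential, does not supply one.
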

 
Let us discuss three consequences of this second result. The first one concerns the class of functions with bounded variation, without boundedness assumptions.

\begin{cor}[BV functions]\label{cor:BV}

Let $\omega:[0,+\infty)\to[0,+\infty)$ be a continuous function that satisfies (\ref{hp:omega>0}) and (\ref{hp:omega-int}). Let $\Omega\subseteq\re^{d}$ be a connected open set, and let $u:\Omega\to\re$ be a non-constant function with bounded variation.

Then it turns out that $\Fom(u,\Omega)=+\infty$. 

\end{cor}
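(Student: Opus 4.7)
My plan is to reduce Corollary~\ref{cor:BV} to Theorem~\ref{thm:AD} through a slicing argument. Every BV function is approximately differentiable almost everywhere, so the only hypothesis of Theorem~\ref{thm:AD} that is not automatically satisfied by a BV function is $u\in L^\infty$. In dimension one this is automatic (the precise representative of a BV function on an interval has finite total variation and is therefore bounded), so Theorem~\ref{thm:AD} settles the case $d=1$ immediately. For $d\ge 2$ I plan to reduce to this one-dimensional case.

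The reduction uses the polar change of variables $y=x+r\theta$ with $r>0$ and $\theta\in S^{d-1}$: the Jacobian $r^{d-1}$ exactly cancels the singular factor $|y-x|^{d}$, and Fubini then produces an identity of the form
$$\Fom(u,\Omega) \;=\; \frac{1}{2}\int_{S^{d-1}}\int_{\theta^{\perp}} \Fom^{(1)}\bigl(u_{x',\theta},\,I_{x',\theta}\bigr)\,d\mathcal{H}^{d-1}(x')\,d\theta,$$
where $u_{x',\theta}(s):=u(x'+s\theta)$, $I_{x',\theta}:=\{s\in\re:x'+s\theta\in\Omega\}$ is an open subset of $\re$, and $\Fom^{(1)}$ denotes the one-dimensional version of $\Fom$. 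Because $\Fom^{(1)}$ over a disjoint union of open intervals dominates the sum of $\Fom^{(1)}$ over each connected component, it suffices to exhibit a positive-measure subset of pairs $(\theta,x')$ for which $u_{x',\theta}$ is non-constant on some component of $I_{x',\theta}$: on that subset the one-dimensional case yields $\Fom^{(1)}=+\infty$, and integration then forces $\Fom(u,\Omega)=+\infty$.

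The existence of such slices is furnished by the classical slicing theorem for BV functions, which gives $u_{x',\theta}\in BV(I_{x',\theta})$ for $\mathcal{H}^{d-1}$-a.e.\ $x'$, together with the identity $\int_{\theta^\perp}|Du_{x',\theta}|(I_{x',\theta})\,d\mathcal{H}^{d-1}(x')=|D_\theta u|(\Omega)$. Writing the polar decomposition $Du=\vec v\,|Du|$ of the non-zero vector measure $Du$, one has $|D_\theta u|(\Omega)=\int_\Omega|\langle\vec v,\theta\rangle|\,d|Du|$, which vanishes only on a $d\theta$-null subset of $S^{d-1}$; for every other direction $\theta$ the slice $u_{x',\theta}$ must be non-constant on a set of $x'$ of positive $\mathcal{H}^{d-1}$-measure, which is exactly what is needed. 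The main technical obstacle will be the rigorous verification of the Fubini identity when $I_{x',\theta}$ is disconnected, a routine but delicate exercise in polar coordinates; the other ingredients (BV~$\subset L^\infty$ in dimension one, approximate differentiability of BV functions, and the directional slicing formula for BV) are classical tools from geometric measure theory.
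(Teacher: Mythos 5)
Your proposal is correct and follows essentially the same route as the paper: reduce to the one-dimensional case via an integral-geometric slicing of $\Fom$, observe that a.e.\ one-dimensional section of a $BV$ function is one-dimensional $BV$ and hence bounded and approximately differentiable a.e., and apply Theorem~\ref{thm:AD} to the sections. The one substantive difference is the way the two proofs certify that a positive measure of sections is non-constant: the paper argues by contraposition using the soft, purely measure-theoretic fact (recorded in its section on the reduction to dimension one) that a function whose one-dimensional sections are a.e.\ constant is itself constant, whereas you go forward using the quantitative $BV$ slicing identity $\int_{\theta^\perp}|Du_{x',\theta}|(I_{x',\theta})\,d\mathcal{H}^{d-1}(x')=|D_\theta u|(\Omega)=\int_\Omega|\langle\vec v,\theta\rangle|\,d|Du|$ and the non-vanishing of $Du$. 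Both are valid; the paper's version is lighter (it needs no $BV$-specific slicing formula, only the standard fact that sections of a $BV$ function are $BV$), while yours is more explicit and constructive. A small remark: the paper first localizes to a ball, which makes every section a single interval and sidesteps the disconnected-slice issue you flag as a ``delicate exercise''; if you do the same reduction (constancy being local and $\Omega$ connected), that technical point disappears entirely.
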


The second consequence concerns the case where $\omega$ is bounded away from~0 at infinity, in which case we do not need to require the approximate differentiability of $u$. 

\begin{cor}[Case where $\omega$ is positive at infinity]\label{cor:limit}

Let $\omega:[0,+\infty)\to[0,+\infty)$ be a continuous function that satisfies (\ref{hp:omega>0}) and (\ref{hp:omega-int}), and in addition 
\begin{equation}
\liminf_{\mu\to +\infty}\omega(\mu)>0.
\label{hp:omega-liminf>0}
\end{equation}

Let $\Omega\subseteq\re^{d}$ be a connected open set, and let $u\in L^{\infty}(\Omega)$ be a non-constant function.

Then it turns out that $\Fom(u,\Omega)=+\infty$. 

\end{cor}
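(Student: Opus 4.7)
The plan is to deduce Corollary~\ref{cor:limit} from Theorem~\ref{thm:AD} by showing that, under the extra assumption~(\ref{hp:omega-liminf>0}), every bounded measurable $u:\Omega\to\re$ with $\Fom(u,\Omega)<+\infty$ must in fact be approximately differentiable at almost every point of $\Omega$. Once this is established, applying Theorem~\ref{thm:AD} to the non-constant bounded $u$ immediately gives $\Fom(u,\Omega)=+\infty$, which is the desired contradiction.

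To carry out the reduction I would argue by contradiction. Suppose $\Fom(u,\Omega)<+\infty$ and choose $c_0>0$ and $\mu_0\geq 1$ with $\omega(\mu)\geq c_0$ for every $\mu\geq\mu_0$, as granted by~(\ref{hp:omega-liminf>0}). On the set $A:=\{(x,y)\in\Omega\times\Omega:|u(y)-u(x)|\geq\mu_0|y-x|\}$ the integrand of $\Fom(u,\Omega)$ is bounded below by $c_0|y-x|^{-d}$, so
\begin{equation*}
\iint_{A}\frac{dx\,dy}{|y-x|^d}\leq\frac{1}{c_0}\,\Fom(u,\Omega)<+\infty.
\end{equation*}
By Fubini, the slice $A_x:=\{y\in\Omega:(x,y)\in A\}$ satisfies $\int_{A_x}|y-x|^{-d}\,dy<+\infty$ for a.e.\ $x\in\Omega$. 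For each such $x$ the trivial bound $|y-x|^{-d}\geq r^{-d}$ on $B(x,r)$, combined with dominated convergence, yields
\begin{equation*}
\frac{|A_x\cap B(x,r)|}{|B(x,r)|}\leq\frac{1}{|B(0,1)|}\int_{A_x\cap B(x,r)}\frac{dy}{|y-x|^d}\longrightarrow 0\qquad\text{as }r\to 0^{+},
\end{equation*}
so $A_x$ has Lebesgue density zero at $x$.

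The density-zero property of $A_x$ is equivalent to the statement that $\aplimsup_{y\to x}|u(y)-u(x)|/|y-x|\leq\mu_0<+\infty$ for a.e.\ $x\in\Omega$. At this point one invokes the approximate version of Stepanov's differentiability theorem (a classical result of geometric measure theory; compare~\cite[Section~3.1]{federer:GMT}), which ensures that a measurable function with finite approximate Lipschitz constant at almost every point is approximately differentiable at almost every point. This provides approximate differentiability of $u$ at a.e.\ $x\in\Omega$, and Theorem~\ref{thm:AD} then forces $\Fom(u,\Omega)=+\infty$, contradicting the initial hypothesis. The only delicate ingredient is this last invocation: care must be taken to quote a version of the approximate Stepanov theorem whose notion of approximate differentiability matches the one from~\cite[Section~3.1.2]{federer:GMT} that is adopted in Theorem~\ref{thm:AD}.
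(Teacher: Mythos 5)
Your argument is correct, but it follows a route that is genuinely different from the one in the paper. Both proofs ultimately reduce Corollary~\ref{cor:limit} to Theorem~\ref{thm:AD}; the divergence is in how one establishes that a.e.~approximate differentiability is forced by $\Fom(u,\Omega)<+\infty$ under assumption~(\ref{hp:omega-liminf>0}). The paper first passes to one-dimensional sections $u_{v,z}$ via the integral-geometric decomposition of section~\ref{sec:sectioning}, and then observes that~(\ref{hp:omega-liminf>0}) allows one to take $\mu_{2}=+\infty$ in Theorem~\ref{thm:u'>0}; applying this for a countable sequence of thresholds $\mu_{1}\downarrow 0$ one concludes that the one-dimensional sections have approximate derivative equal to~$0$ a.e., and Theorem~\ref{thm:AD} finishes the job. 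You instead work directly in $\Omega\subseteq\re^{d}$: using $\omega\geq c_{0}$ on $[\mu_{0},+\infty)$, Tonelli's theorem, and the elementary density estimate you isolate, for a.e.~$x$, that the set where the difference quotient exceeds $\mu_{0}$ has density zero at $x$. This gives $\aplimsup_{y\to x}|u(y)-u(x)|/|y-x|\leq\mu_{0}$ a.e., and then you invoke the approximate version of Stepanov's theorem (Federer, Theorems~3.1.8--3.1.9) to obtain approximate differentiability a.e., which is precisely the notion used in Theorem~\ref{thm:AD}, so the appeal is legitimate. The paper's route stays within the machinery it has already built (the $1$D sectioning and Theorem~\ref{thm:u'>0}) and avoids citing the approximate Stepanov theorem; your route is arguably cleaner conceptually in that it stays in $\re^{d}$ and obtains approximate differentiability in one shot, at the cost of relying on a more sophisticated external result. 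Note that your argument only gives approximate differentiability (not necessarily that the approximate derivative vanishes a.e., as the paper obtains); this is harmless because Theorem~\ref{thm:AD} handles both possibilities internally.
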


The last consequence of the second result is a positive answer to Question~\ref{open:ignat} with the additional assumption that $\omega$ is nondecreasing, but without any further requirement on $u$. The investigation of this case was suggested to the second author by L.~Ambrosio. 

\begin{cor}[Case where $\omega$ is nondecreasing]\label{cor:monotone}

Let $\omega:[0,+\infty)\to[0,+\infty)$ be a nondecreasing continuous function that satisfies (\ref{hp:omega>0}) and (\ref{hp:omega-int}). Let $\Omega\subseteq\re^{d}$ be a connected open set, and let $u:\Omega\to\re$ be a non-constant measurable function.

Then it turns out that $\Fom(u,\Omega)=+\infty$. 

\end{cor}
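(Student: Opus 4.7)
The plan is to reduce Corollary~\ref{cor:monotone} to Corollary~\ref{cor:limit} by a truncation argument that exploits the monotonicity of $\omega$.

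First, I would observe that a nondecreasing $\omega\in\mathcal{W}$ satisfying~(\ref{hp:omega-int}) cannot be bounded: if $\omega(\mu)\leq C$ for every $\mu$, then $\int_{1}^{+\infty}\omega(\mu)/\mu^{2}\,d\mu\leq C<+\infty$, contradicting~(\ref{hp:omega-int}). Being nondecreasing and unbounded, $\omega$ must diverge at infinity, so the extra hypothesis~(\ref{hp:omega-liminf>0}) of Corollary~\ref{cor:limit} is automatically satisfied.

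Second, for every $M>0$ I would set $v_{M}(x):=\max\{-M,\min\{M,u(x)\}\}$, so that $v_{M}\in L^{\infty}(\Omega)$. Because the truncation map is $1$-Lipschitz, we have
\begin{equation*}
\frac{|v_{M}(y)-v_{M}(x)|}{|y-x|}\leq \frac{|u(y)-u(x)|}{|y-x|}
\qquad\text{for all }x\neq y,
\end{equation*}
and the monotonicity of $\omega$ then yields $\Fom(v_{M},\Omega)\leq \Fom(u,\Omega)$.

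Finally, since $v_{M}\to u$ pointwise as $M\to+\infty$, if every $v_{M}$ were essentially constant (equal to some $c_{M}\in[-M,M]$), then passing to the limit would force $u$ itself to be essentially constant, contradicting the assumption. Hence $v_{M}$ is non-constant for all sufficiently large $M$, and applying Corollary~\ref{cor:limit} to such $v_{M}$ gives $\Fom(v_{M},\Omega)=+\infty$, whence $\Fom(u,\Omega)=+\infty$. I do not expect any serious obstacle: all three ingredients -- unboundedness of $\omega$, the monotone truncation inequality, and the persistence of non-constancy -- are immediate consequences of the hypotheses, and the monotonicity of $\omega$ is used precisely to compensate for the lack of any regularity or boundedness assumption on $u$.
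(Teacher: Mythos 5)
Your proof is correct and follows essentially the same route as the paper: truncate $u$, use monotonicity of $\omega$ together with the $1$-Lipschitz truncation to show $\Fom(v_M,\Omega)\leq\Fom(u,\Omega)$, check that $v_M$ stays non-constant for large $M$, and invoke Corollary~\ref{cor:limit}. The only cosmetic difference is that you derive condition~(\ref{hp:omega-liminf>0}) via unboundedness of $\omega$ (using the integral condition), whereas the paper obtains it even more directly from monotonicity and positivity alone, since $\liminf_{\mu\to+\infty}\omega(\mu)\geq\omega(1)>0$ for any nondecreasing $\omega$ with~(\ref{hp:omega>0}).
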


For the convenience of the reader, we summarize in Table~\ref{table:SoA} the positive and negative results that are known at the present. The positive answers of the last row follow from the $W^{1,1}$ case of~\cite[Theorem~1.5]{2005-ignat}, while the negative answers in the second column depend on the Heaviside counterexample of~~\cite[Theorem~1.2]{2005-ignat}. All remaining positive and negative answers follow from the results of the present paper. In addition, our counterexample and our positive results motivate some further open questions, which we state and discuss at the end of this paper. Some of these open problems appear as question marks in the table.

\begin{table}[ht]
\begin{center}

\renewcommand{\arraystretch}{1.6}
\begin{tabular}{|c||c|c|c|c|}
\hline
 & (P) & (P)+(I) & (P)+(I)+(L) & (P)+(I)+(M) 
\\
\hline\hline
measurable & No \cite{2005-ignat} & No (Thm.~\ref{thm:counterexample}) & No (Thm.~\ref{thm:counterexample}) & Yes (Cor.~\ref{cor:monotone}) 
\\
\hline
$L^{1}$ & No \cite{2005-ignat} & ??? & ??? & Yes (Cor.~\ref{cor:monotone}) 
\\
\hline
$L^{\infty}$ & No \cite{2005-ignat} & ??? & Yes (Cor.~\ref{cor:limit}) & Yes (Cor.~\ref{cor:monotone}) 
\\
\hline
$L^{\infty}+AD$ & No \cite{2005-ignat} & Yes (Thm.~\ref{thm:AD}) & Yes (Thm.~\ref{thm:AD}) & Yes (Thm.~\ref{thm:AD}) 
\\
\hline
$W^{1,1}$ & Yes \cite{2005-ignat} & Yes \cite{2005-ignat} & Yes \cite{2005-ignat} & Yes \cite{2005-ignat}
\\
\hline
\end{tabular} 
\caption{State of the art on Question~\ref{open:ignat}. Headers in the first column denote assumptions on $u$, with ``$AD$'' that stands for approximately differentiable at almost every point. Headers in the first row denote assumptions on $\omega$: (P) is the positivity requirement (\ref{hp:omega>0}), (I) is the integral condition (\ref{hp:omega-int}), (L) is the liminf condition (\ref{hp:omega-liminf>0}) at infinity, and (M) is the monotonicity of $\omega$. Questions marks refer to cases that are still open.}

\end{center}

\label{table:SoA}
\end{table}

\paragraph{\textmd{\emph{Overview of the technique}}}

In this paragraph we describe briefly and informally the ideas that are involved in our proofs.

The previous results by R.~Ignat show the direction that we have to follow in order to construct the counterexample of Theorem~\ref{thm:counterexample}. Indeed, we know that $\omega$ has to satisfy the necessary condition (\ref{hp:omega-int}), but not the sufficient condition (\ref{hp:omega-liminf}). Roughly speaking, this means that $\omega(\mu)$, as $\mu\to+\infty$, has to alternate regions where it is ``small'', so that (\ref{hp:omega-liminf}) fails, with regions where it is ``large'', so that the integral in (\ref{hp:omega-int}) diverges.

As for $u$, its difference quotients need to be ``large'' enough, because we know that $u\not\in W^{1,1}$, but in the same time these large difference quotients have to avoid the regions where $\omega$ is ``large'', because otherwise also $\Fom(u,\Omega)$ diverges. This anomalous concentration of difference quotients in alternating regions makes the construction of the counterexample challenging and somewhat counterintuitive.

As for the positive result of Theorem~\ref{thm:AD}, we begin by reducing ourselves to the one-dimensional case where $u$ is defined in some interval $(a,b)$. Then we distinguish two cases according to the values of the approximate derivative $\ap u'(x)$ in $(a,b)$.

\begin{itemize}

\item  If $\ap	u'(x)$ is different from~0 in a subset of $(a,b)$ with positive measure, then we can argue more or less as in the $W^{1,1}$ case (see~\cite[Theorem~1.5]{2005-ignat}). More precisely, it is enough to write $\Fom(u,(a,b))$ in the form
\begin{equation}
\int_{a}^{b}dx\int_{a}^{b}\omega\left(\frac{|u(y)-u(x)|}{|y-x|}\right)\frac{1}{|y-x|}\,dy,
\nonumber
\end{equation}
and then proving that the integral with respect to~$y$ diverges for every $x$ such that $\ap u'(x)\neq 0$. We refer to Theorem~\ref{thm:u'>0} for the details. 

\item  If $\ap u'(x)=0$ for almost every $x\in(a,b)$, then we need a completely different argument in order to exploit the integral condition (\ref{hp:omega-int}). The intuitive idea is to isolate, in the double integral (\ref{defn:F-omega}), the contribution of pairs $(x,y)$ such that the corresponding difference quotient is equal to a given $\mu$. This idea of expressing the integral of a function in terms of integrals over the level sets of another function (here the difference quotient) reminded us of the coarea formula and of the disintegration theorem in measure theory. Nevertheless, in this case there is not enough regularity in order to apply the former, and the latter is a nice general result that in its abstract form does not provide an efficient description of the measures on the fibers. In order to overcome this difficulty, we slice $(a,b)^{2}$ by considering sets of the form
\begin{equation}
Z(u,(a,b),[\mu-\delta,\mu]):=\left\{(x,y)\in(a,b)^{2}: y>x,\ 
\mu-\delta\leq\frac{u(y)-u(x)}{y-x}\leq\mu\right\},
\nonumber
\end{equation}
and we introduce the function
\begin{equation}
\gamma(\mu):=\liminf_{\delta\to 0^{+}}\frac{1}{\delta}\iint_{Z(u,(a,b),[\mu-\delta,\mu])}
\omega\left(\frac{|u(y)-u(x)|}{y-x}\right)\frac{1}{y-x}\,dx\,dy.
\label{defn:gamma(mu)}
\end{equation}

If $u$ admits two Lebesgue points $x_{1}<x_{2}$ with $u(x_{1})<u(x_{2})$ (the other case is symmetric), then we prove that
\begin{equation}
\gamma(\mu)\geq c_{0}\frac{\omega(\mu)}{\mu^{2}}
\qquad
\forall\mu>\mu_{0}
\label{th:gamma>omega}
\end{equation}
for suitable positive constants $c_{0}$ and $\mu_{0}$. When the integral condition (\ref{hp:omega-int}) is satisfied, this is enough to conclude that $\Fom(u,(a,b))=+\infty$. 

\end{itemize}

In each of the two cases discussed above only one assumption on $\omega$ is involved. More precisely, in the first case the divergence of $\Fom(u,(a,b))$ relies only on the positivity assumption (\ref{hp:omega>0}), while in the second case it relies only on the integral condition (\ref{hp:omega-int}).

The proof of (\ref{th:gamma>omega}) follows from Theorem~\ref{thm:loc-vs-glob}, which is the technical core of the paper, after introducing an auxiliary function. In the original variables, the disintegration with respect to $\mu$ corresponds to intersecting the graph of $u(x)$ with the family of parallel lines with equation $\mu x+a$. The geometric idea is that, when  the angular coefficient $\mu$ is large enough, then there are ``enough values'' of $a$ for which the line intersects the graph of $u(x)$ in at least two points. These multiple intersections suggest that the integration set in the right-hand side of (\ref{defn:gamma(mu)}) is big enough, even for small values of $\delta$. The quantification of this effect requires a further disintegration process, this time with respect to $a$, which leads to a representation of $\gamma(\mu)$ as an integral with respect to $a$. We refer to section~\ref{sec:loc-vs-glob} for the details.

We point out that our proof of Theorem~\ref{thm:loc-vs-glob} requires the approximate differentiability, but we do suspect that the conclusion could be true even without this assumption, which would lead to a proof of Theorem~\ref{thm:AD} without the approximate differentiability assumption (see Open problem~\ref{open:loc-vs-glob}).

\paragraph{\textmd{\emph{Structure of the paper}}}

This paper is organized as follows. In section~\ref{sec:counterexample} we present our counterexample that proves Theorem~\ref{thm:counterexample}. In section~\ref{sec:lemma} we collect some simple technical lemmata. In section~\ref{sec:loc-vs-glob} we develop the technical core of the paper, that culminates in the proof of Theorem~\ref{thm:loc-vs-glob}. In section~\ref{sec:proof-AD} we prove Theorem~\ref{thm:AD} and its corollaries. Finally, in section~\ref{sec:further} we present some questions that remain open.


\setcounter{equation}{0}
\section{Our counterexample (proof of Theorem~\ref{thm:counterexample})}\label{sec:counterexample}

In this section we prove Theorem~\ref{thm:counterexample} by exhibiting an explicit example of functions $\omega$ and $u$ with the required properties. More precisely, we actually construct a class of counterexamples, depending on two sequences of real numbers $\{k_{n}\}$ and $\{\mu_{n}\}$ that grow fast enough.

\paragraph{\textmd{\textit{Preliminaries -- The Cantor set}}}

Let $C\subseteq[0,1]$ denote the classical middle-third Cantor set. For every positive integer $n$, let $A_{n}\subseteq[0,1]$ denote the open set that is removed from $[0,1]$ in the $n$-th step of the construction of $C$. More formally, the sequence $\{A_{n}\}$ is defined by
\begin{equation}
A_{1}=\left(\frac{1}{3},\frac{2}{3}\right)
\qquad\mbox{and}\qquad
A_{n+1}=\frac{A_{n}}{3}\cup\left(\frac{2}{3}+\frac{A_{n}}{3}\right)
\quad
\forall n\geq 1,
\nonumber
\end{equation}
so that
\begin{equation}
[0,1]\setminus C=\bigcup_{n=1}^{\infty}A_{n}.
\nonumber
\end{equation}

In the sequel we need the following two properties of $\{A_{n}\}$.

\begin{itemize}

\item  (Lebesgue measure). If $\Leb (A_n)$ denotes the Lebesgue measure of $A_{n}$, then
\begin{equation}
\Leb (A_n)=\frac{1}{2}\left(\frac{2}{3}\right)^{n}
\qquad
\forall n\geq 1.
\label{th:Leb-aj}
\end{equation}

\item (Distance estimate). For every pair of positive integers $i<j$, the distance between $A_{i}$ and $A_{j}$ is $3^{-j}$, and in particular
\begin{equation}
|y-x|\geq\frac{1}{3^{j}}
\qquad
\forall x\in A_{i},\quad\forall y\in A_{j}.
\label{th:dist-ai-aj}
\end{equation}

\end{itemize}

\paragraph{\textmd{\textit{Definition of $\omega$}}}

Let us choose once for all two sequences of real numbers $\{k_n\}\subseteq [1,+\infty)$ and $\{\mu_n\}\subseteq [1,+\infty)$ such that
\begin{equation}
\mu_n\geq 3^{n} k_n
\qquad\mbox{and}\qquad 
k_{n+1}\geq k_n+\mu_n+3
\label{defn:kn-mun}
\end{equation}
for every $n\geq 1$, and
\begin{equation}
\sum_{n=1}^{\infty}n^{2}\left(\frac{2}{3}\right)^{n}\exp\left((\log\mu_{n})^{1/4}\right)<+\infty.
\label{defn:mun-series}
\end{equation}

For example, we can consider $k_n := 10^{n^2}$ and $\mu_n := 3^{n}\cdot 10^{n^2}$.

We observe that $\mu_{n+1}>\mu_{n}+3$ for every $n\geq 1$, and therefore we can consider the function $\omega:[0,+\infty)\to[0,+\infty)$ such that
\begin{itemize}

\item  $\omega(0)=0$,

\item  $\omega(\mu)=\mu^{2}$ if $\mu\in[\mu_{i}+1,\mu_{i}+2]$ for some integer $i\geq 1$,

\item  $\omega(\mu)=\exp((\log\mu)^{1/4})$ if $\mu\in[\mu_{i}+3,\mu_{i+1}]$ for some integer $i\geq 1$,

\item  $\omega$ is the affine interpolation between the values at the endpoints in $[0,\mu_{1}+1]$, and in all subsequent intervals of the form $[\mu_{i},\mu_{i}+1]$ or $[\mu_{i}+2,\mu_{i}+3]$.

\end{itemize}

From the definition it follows that $\omega(\mu)$ is continuous, null in the origin, and satisfies the positivity assumption (\ref{hp:omega>0}) and the growth condition (\ref{th:growth-omega}). Moreover it turns out that
\begin{equation}
\int_{1}^{+\infty}\frac{\omega(\mu)}{\mu^{2}}\,d\mu\geq
\sum_{i=1}^{\infty}\int_{\mu_{i}+1}^{\mu_{i}+2}\frac{\omega(\mu)}{\mu^{2}}\,d\mu=
\sum_{i=1}^{\infty}1,
\nonumber
\end{equation}
which proves that $\omega$ satisfies also the integral condition (\ref{hp:omega-int}).

\paragraph{\textmd{\textit{Definition of $u$}}}

Let $\{k_n\}$ and $\{\mu_n\}$ be the two sequences that we have chosen above. Let $u:[0,1]\to\re$ be defined by
\begin{equation}
u(x):=\left\{
\begin{array}{l@{\qquad}l}
0 & \mbox{if }x\in C, 
\\[0.5ex]
k_{n} & \mbox{if $x\in A_{n}$ for some $n\geq 1$.}
\end{array}\right.
\nonumber
\end{equation}

We observe that the definition of $u$ can be stated in an alternative way by relying on the representation of real numbers in base~3. Indeed, if we characterize the Cantor set as the set of real numbers in $[0,1]$ that do not require the digit~1 in order to be expressed as a ternary (base~3) fraction, then $u(x)=k_{n}$ if and only if $n$ is the position of the first digit~1 that is required in the ternary representation of $x$. The values of $u(x)$ for $x\in C$ are not relevant because $C$ has zero Lebesgue measure.

\paragraph{\textmd{\textit{Key property of difference quotients of $u$}}}

We claim that, for every pair of positive integers $i<j$, it turns out that
\begin{equation}
\frac{|u(y)-u(x)|}{|y-x|} \in [\mu_{j-1}+3 , \mu_j]
\qquad
\forall x\in A_{i},\quad\forall y\in A_{j},
\label{th:diff-q-u}
\end{equation}
which implies that the difference quotients of $u$ lie in the intervals where $\omega$ is ``small''.

To this end, we just observe that $u(y)-u(x)=k_{j}-k_{i}$ and
\begin{equation}
\frac{1}{3^{j}}\leq |y-x|\leq 1
\qquad
\forall x\in A_{i},\quad\forall y\in A_{j},
\nonumber
\end{equation}
where the estimate from below follows from (\ref{th:dist-ai-aj}). Thus from (\ref{defn:kn-mun}) we deduce that
\begin{equation}
\frac{|u(y)-u(x)|}{|y-x|}\geq 
k_{j}-k_{i}\geq
k_{j}-k_{j-1}\geq
\mu_{j-1}+3,
\nonumber
\end{equation}
and
\begin{equation}
\frac{|u(y)-u(x)|}{|y-x|}\leq 
3^{j}(k_{j}-k_{i})\leq
3^{j}k_{j}\leq
\mu_{j},
\nonumber
\end{equation}
which proves (\ref{th:diff-q-u}).

\paragraph{\textmd{\textit{Conclusion}}}

It remains to show that $\Fom(u,(0,1))<+\infty$.

To this end, we observe that in the double integral there is no contribution from pairs $(x,y)$ with either $x\in C$ or $y\in C$ (because the Lebesgue measure of the Cantor set is equal to~0), and there is no contribution from pairs $(x,y)$ with $x$ and $y$ in the same open set $A_{i}$ (because in this case the difference quotient is~0, and $\omega(0)=0$). Therefore, we can limit ourselves to pairs $(x,y)$ with $x\in A_{i}$ and $y\in A_{j}$ for some $i\neq j$. Due to the symmetry in $x$ and $y$ we obtain that
\begin{equation}
\Fom(u,(0,1))=2\sum_{j=2}^{\infty}\sum_{i=1}^{j-1}\iint_{A_{i}\times A_{j}}
\omega\left(\frac{|u(y)-u(x)|}{|y-x|}\right)\cdot\frac{1}{|y-x|}\,dx\,dy.
\label{eqn:double-sum}
\end{equation}

From (\ref{th:diff-q-u}), and the explicit formula for $\omega(\mu)$ in $[\mu_{j-1}+3,\mu_{j}]$, we deduce that
\begin{equation}
\omega\left(\frac{|u(y)-u(x)|}{|y-x|}\right)\leq
\exp\left((\log\mu_{j})^{1/4}\right)
\qquad
\forall (x,y)\in A_{i}\times A_{j}.
\label{est:omega-aj}
\end{equation}

On the other hand, from (\ref{th:dist-ai-aj}) we know that
\begin{equation}
A_{i}\subseteq\left[0,y-3^{-j}\right]\cup\left[y+3^{-j},1\right]
\qquad
\forall y\in A_{j},\quad\forall i<j,
\nonumber
\end{equation}
and therefore for every $y\in A_{j}$ it turns out that
\begin{equation}
\int_{A_{i}}\frac{1}{|y-x|}\,dx\leq
\int_{0}^{y-3^{-j}}\frac{1}{|y-x|}\,dx+\int_{y+3^{-j}}^{1}\frac{1}{|y-x|}\,dx\leq
2\log(3^{j})
\label{est:x-y-aj}
\end{equation}

From (\ref{est:omega-aj}), (\ref{est:x-y-aj}) and (\ref{th:Leb-aj}) we deduce that
\begin{eqnarray*}
\iint_{A_{i}\times A_{j}}
\omega\left(\frac{|u(y)-u(x)|}{|y-x|}\right)\cdot\frac{1}{|y-x|}\,dx\,dy
& \leq &
\exp\left((\log\mu_{j})^{1/4}\right)
\iint_{A_{i}\times A_{j}}\frac{1}{|y-x|}\,dx\,dy
\\[1ex]
& \leq & \exp\left((\log\mu_{j})^{1/4}\right)\cdot\Leb(A_{j})\cdot 2\log(3^{j})
\\[1ex]
& \leq & 2j\left(\frac{2}{3}\right)^{j}\exp\left((\log\mu_{j})^{1/4}\right)
\end{eqnarray*}
for every $i<j$. Plugging this estimate into (\ref{eqn:double-sum}) we conclude that
\begin{equation}
\Fom(u,(0,1))\leq
4\sum_{j=2}^{\infty}\sum_{i=1}^{j-1}
j\left(\frac{2}{3}\right)^{j}\exp\left((\log\mu_{j})^{1/4}\right)\leq
4\sum_{j=1}^{\infty}j^{2}\left(\frac{2}{3}\right)^{j}\exp\left((\log\mu_{j})^{1/4}\right).
\nonumber
\end{equation}

Due to (\ref{defn:mun-series}), this proves that $\Fom(u,(0,1))$ is finite.
\qed



\setcounter{equation}{0}
\section{Preliminaries}\label{sec:lemma}

In this section we state and prove some simple technical results that we need in the sequel. In the first one we show the divergence of the integral of $1/x$ over a measurable set $E\subseteq(0,+\infty)$ that has positive upper density in 0.

\begin{lemma}\label{lemma:pos-dens}

Let $E\subseteq(0,+\infty)$ be a measurable set with positive upper density in~0, namely such that
\begin{equation}
\limsup_{\delta\to 0^{+}}\frac{\Leb(E\cap(0,\delta))}{\delta}>0.
\label{hp:pos-dens}
\end{equation}

Then it turns out that
\begin{equation}
\int_{E}\frac{1}{x}\,dx=+\infty.
\nonumber
\end{equation}

\end{lemma}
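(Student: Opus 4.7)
The plan is to exploit the divergence of the harmonic-type integral by accumulating contributions from a sequence of dyadic-like scales converging to~$0$. The hypothesis (\ref{hp:pos-dens}) gives a constant $c>0$ and a sequence $\delta_{n}\to 0^{+}$ such that $\Leb(E\cap(0,\delta_{n}))\geq c\delta_{n}$ for every~$n$, and I would extract from it a subsequence (still denoted $\{\delta_{n}\}$) that decreases fast enough, for instance with $\delta_{n+1}\leq c\delta_{n}/2$ for every~$n$.

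With such a subsequence, I would decompose $E$ into the disjoint pieces $E\cap(\delta_{n+1},\delta_{n}]$, so that the inclusion
\begin{equation}
\int_{E}\frac{1}{x}\,dx \;\geq\; \sum_{n=1}^{\infty}\int_{E\cap(\delta_{n+1},\delta_{n}]}\frac{1}{x}\,dx
\nonumber
\end{equation}
holds by monotone convergence. On each piece I would use the trivial bound $1/x\geq 1/\delta_{n}$ to get $\int_{E\cap(\delta_{n+1},\delta_{n}]}\frac{1}{x}\,dx\geq \Leb(E\cap(\delta_{n+1},\delta_{n}])/\delta_{n}$.

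Next I would estimate the measure of each piece from below by subtracting:
\begin{equation}
\Leb(E\cap(\delta_{n+1},\delta_{n}])\;\geq\;\Leb(E\cap(0,\delta_{n}))-\Leb(E\cap(0,\delta_{n+1}))\;\geq\; c\delta_{n}-\delta_{n+1},
\nonumber
\end{equation}
and the choice $\delta_{n+1}\leq c\delta_{n}/2$ would make this at least $c\delta_{n}/2$. Thus each term of the series is bounded below by $c/2$, and summing over~$n$ gives~$+\infty$.

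There is no real obstacle here: the only slightly delicate point is the selection of the subsequence, which must shrink geometrically at a rate compatible with the density constant~$c$ so that the ``leakage'' at scale $\delta_{n+1}$ does not eat up the mass captured at scale $\delta_{n}$. Everything else is a direct lower-bound calculation and a telescoping argument, and the conclusion is immediate once the uniform lower bound on each dyadic contribution is in place.
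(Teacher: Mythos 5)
Your proof is correct and follows essentially the same route as the paper: extract a geometrically decreasing subsequence $\delta_{n+1}\leq c\delta_n/2$, split $E$ into the disjoint annuli $E\cap(\delta_{n+1},\delta_n]$, bound each one's measure from below by $c\delta_n/2$ using the trivial bound $\Leb(E\cap(0,\delta_{n+1}))\leq\delta_{n+1}$, and conclude each annulus contributes at least $c/2$ to the integral. The only cosmetic difference is that the paper phrases the measure lower bound as a short proof by contradiction rather than a direct subtraction.
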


\begin{proof}

From (\ref{hp:pos-dens}) we deduce that there exist a real number $\ep_{0}>0$ and a sequence $\delta_{n}\to 0^{+}$ such that
\begin{equation}
\Leb(E\cap(0,\delta_{n}))\geq\ep_{0}\delta_{n}
\qquad
\forall n\geq 1.
\label{hp:quantitative}
\end{equation}

Up to subsequences, we can also assume that
\begin{equation}
\delta_{n+1}\leq\frac{\ep_{0}}{2}\delta_{n}
\qquad
\forall n\geq 1.
\nonumber
\end{equation}

Let us set
\begin{equation}
E_{n}:=E\cap(\delta_{n+1},\delta_{n})
\qquad
\forall n\geq 1.
\nonumber
\end{equation}

Then $\{E_{n}\}$ is a sequence of \emph{disjoint} subsets of $E$ such that
\begin{equation}
\Leb(E_{n})\geq\frac{\ep_{0}}{2}\delta_{n}
\qquad
\forall n\geq 1,
\label{est:leb-An}
\end{equation}
because otherwise
\begin{equation}
\Leb(E\cap(0,\delta_{n}))=
\Leb(E\cap(0,\delta_{n+1}))+\Leb(E_{n})\leq
\delta_{n+1}+\Leb(E_{n})<
\ep_{0}\delta_{n},
\nonumber
\end{equation}
which contradicts (\ref{hp:quantitative}). From (\ref{est:leb-An}) we deduce that
\begin{equation}
\int_{E_{n}}\frac{1}{x}\,dx\geq
\Leb(E_{n})\cdot\frac{1}{\delta_{n}}=\frac{\ep_{0}}{2}
\qquad
\forall n\geq 1,
\nonumber
\end{equation}
and therefore
\begin{equation}
\int_{E}\frac{1}{x}\,dx\geq
\sum_{n=1}^{\infty}\int_{E_{n}}\frac{1}{x}\,dx\geq
\sum_{n=1}^{\infty}\frac{\ep_{0}}{2}=
+\infty,
\nonumber
\end{equation}
which completes the proof.
\end{proof}


In the second result we show that a function of class $C^{1}$ with nonzero derivative maps sets of positive measure into sets of positive measure.

\begin{lemma}\label{lemma:g(B)}

Let $g:\re\to\re$ be a function of class $C^{1}$, and let $B\subseteq\re$ be a measurable set such that $\Leb(B)>0$, and $g'(x)\neq 0$ for every $x\in B$.

Then it turns out that $\Leb(g(B))>0$.

\end{lemma}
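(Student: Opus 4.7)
The plan is to reduce to a situation where $g$ is genuinely (quantitatively) monotone on an interval, and then use the standard fact that a $C^{1}$ function with derivative bounded below in absolute value has a Lipschitz inverse on each monotonicity interval.

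First I would decompose $B$ to get a quantitative lower bound on $|g'|$. Writing
\begin{equation*}
B=\bigcup_{n=1}^{\infty}B_{n},
\qquad
B_{n}:=\left\{x\in B:|g'(x)|\geq 1/n\right\}\cap[-n,n],
\end{equation*}
and using $\Leb(B)>0$, I obtain some $n$ with $\Leb(B_{n})>0$. Replacing $B$ by $B_{n}$, I may assume that $B$ is bounded and $|g'(x)|\geq c$ on $B$ for some constant $c>0$. Since $g'$ is continuous, the sets $U_{+}:=\{g'>0\}$ and $U_{-}:=\{g'<0\}$ are open, and $B\subseteq U_{+}\cup U_{-}$, so (up to symmetry) I may assume $\Leb(B\cap U_{+})>0$.

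Next I would exploit the structure of $U_{+}$. Being an open subset of $\re$, $U_{+}$ is a countable disjoint union of open intervals $\{I_{k}\}$. Since $\Leb(B\cap U_{+})>0$, at least one intersection $B\cap I_{k}$ has positive Lebesgue measure, and then some compact subinterval $[a,b]\subseteq I_{k}$ satisfies $\Leb(B\cap[a,b])>0$. On $[a,b]$ the continuous function $g'$ is strictly positive, hence attains a positive minimum $m>0$. This gives the quantitative monotonicity
\begin{equation*}
g(y)-g(x)\geq m(y-x)
\qquad
\forall\, a\leq x<y\leq b,
\end{equation*}
so $g$ is injective on $[a,b]$ and its inverse $h:g([a,b])\to[a,b]$ is $(1/m)$-Lipschitz.

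Finally I would conclude by the Lipschitz estimate on $h$. For $A:=B\cap[a,b]$, injectivity of $g|_{[a,b]}$ gives $h(g(A))=A$, and since $h$ is $(1/m)$-Lipschitz we have
\begin{equation*}
\Leb(A)=\Leb\bigl(h(g(A))\bigr)\leq\frac{1}{m}\Leb\bigl(g(A)\bigr),
\end{equation*}
so $\Leb(g(B))\geq\Leb(g(A))\geq m\,\Leb(A)>0$. The only delicate step is the last one, where it is important to isolate a genuine monotonicity interval of $g$ with a quantitative derivative bound; without the lower bound on $|g'|$ a monotone $C^{1}$ function could in principle compress sets, so the reduction performed in the first paragraph is the true heart of the argument.
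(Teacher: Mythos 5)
Your proof is correct, but it takes a somewhat different route from the paper's. The paper picks a single Lebesgue density point $b_{0}\in B$, observes that by continuity $|g'|\geq\nu_{0}>0$ on a closed interval $[b_{0}-r_{0},b_{0}+r_{0}]$ around it (the density property guaranteeing that $B_{0}:=B\cap(b_{0}-r_{0},b_{0}+r_{0})$ still has positive measure), and then invokes the area formula to get the exact identity $\Leb(g(B_{0}))=\int_{B_{0}}g'\,dx\geq\nu_{0}\Leb(B_{0})$. You instead localize via the countable decomposition $B=\bigcup B_{n}$ and the structure of the open set $\{g'>0\}$ as a disjoint union of intervals, and then conclude with the Lipschitz bound on the local inverse $h$ rather than the area formula. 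Both strategies hinge on the same core move — isolating a compact interval on which $g'$ is bounded away from zero and on which $B$ retains positive measure — and then converting that quantitative monotonicity into a measure-theoretic lower bound; the area formula gives this as an equality, while your Lipschitz-inverse argument gives the one-sided inequality, which is all that is needed. Two small remarks: your initial reduction to $B_{n}=\{|g'|\geq 1/n\}\cap[-n,n]$ is actually redundant, since once you pass to a compact subinterval $[a,b]\subseteq I_{k}\subseteq\{g'>0\}$ the lower bound $m:=\min_{[a,b]}g'>0$ comes for free from compactness; and, like the paper, you should note (or at least be aware) that $g(A)$ is Lebesgue measurable (image of a measurable set under a locally Lipschitz map), or else phrase the final inequality in terms of outer measure, for which the Lipschitz estimate $\Leb^{*}(h(E))\leq\tfrac{1}{m}\Leb^{*}(E)$ holds verbatim.
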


\begin{proof}

To begin with, we observe that $g(B)$ is measurable, because it is the image of a measurable set through a locally Lipschitz function. Now let us choose a point $b_{0}\in B$ such that
\begin{equation}
\Leb(B\cap(b_{0}-r,b_{0}+r))>0
\qquad
\forall r>0
\nonumber
\end{equation}
(almost every point in $B$ has this property). Let us assume, without loss of generality, that $g'(b_{0})>0$ (the case $g'(b_{0})<0$ is symmetric). Then there exist $r_{0}>0$ and $\nu_{0}>0$ such that $g'(x)\geq\nu_{0}$ for every $x\in[b_{0}-r_{0},b_{0}+r_{0}]$, and a fortiori
\begin{equation}
g'(x)\geq\nu_{0}>0
\qquad
\forall x\in B_{0}:=B\cap(b_{0}-r_{0},b_{0}+r_{0}).
\nonumber
\end{equation}

The restriction of $g$ to the interval $[b_{0}-r_{0},b_{0}+r_{0}]$ is injective, and therefore from the area formula we conclude that
\begin{equation}
\Leb(g(B))\geq
\Leb(g(B_{0}))=
\int_{B_{0}}g'(x)\,dx\geq
\nu_{0}\cdot\Leb(B_{0})>
0,
\nonumber
\end{equation}
which completes the proof.
\end{proof}



The third lemma is a variant of the standard result concerning the derivation of measures (we omit the classical proof based on Vitali's covering theorem). This is the result that we need, but we recall that a stronger version with the upper density (namely with the limsup instead of the liminf) is also true.

\begin{lemma}\label{lemma:disintegration}

Let $\nu$ be a Borel measure in $\re$. Let us consider the lower density of $\nu$ with respect to the Lebesgue measure, defined by
\begin{equation}
\theta(\mu):=\liminf_{r\to 0^{+}}\frac{\nu([\mu-r,\mu])}{r}
\qquad
\forall\mu\in\re.
\nonumber
\end{equation}

Then the function $\theta(\mu)$ is measurable, and for every measurable set $A\subseteq\re$ it turns out that
\begin{equation}
\nu(A)\geq\int_{A}\theta(\mu)\,d\mu.
\nonumber
\end{equation}

\end{lemma}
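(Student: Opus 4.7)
My plan is to combine the classical Vitali covering theorem, applied to fine families of one-sided intervals $[\mu-r,\mu]$, with a dyadic decomposition of $A$ according to the level sets of $\theta$. For the measurability statement, I would observe that for each fixed $r>0$ the map $\mu\mapsto\nu([\mu-r,\mu])$ is the difference of two nondecreasing functions of $\mu$ (namely the ``distribution functions'' $\mu\mapsto\nu((-\infty,\mu])$), hence Borel measurable; writing
\[
\theta(\mu)=\sup_{n\geq 1}\;\inf_{r\in\mathbb{Q}\cap(0,1/n)}\frac{\nu([\mu-r,\mu])}{r}
\]
then exhibits $\theta$ as a countable supremum of countable infima of measurable functions.

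The technical core will be the following auxiliary claim: for every Borel set $B\subseteq\re$ and every $t>0$, if $\theta(\mu)>t$ for all $\mu\in B$, then $\nu(B)\geq t\,\Leb(B)$. To prove it, given any open $U\supseteq B$, the family
\[
\mathcal{V}_U:=\bigl\{[\mu-r,\mu]:\mu\in B,\ [\mu-r,\mu]\subseteq U,\ \nu([\mu-r,\mu])>tr\bigr\}
\]
is a Vitali cover of $B$ by the very definition of $\theta(\mu)>t$; Vitali's theorem then extracts a disjoint subfamily $\{I_i\}$ with $\Leb\bigl(B\setminus\bigcup_i I_i\bigr)=0$, so that
\[
\nu(U)\geq\sum_i\nu(I_i)>t\sum_i\Leb(I_i)\geq t\,\Leb(B),
\]
and the outer regularity of $\nu$ (automatic for locally finite Borel measures on $\re$) gives the claim after infimizing over $U$.

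From the auxiliary claim, the full statement follows by a dyadic decomposition. Fix $\alpha>1$ and set
\[
B_k:=\{\mu\in A:\alpha^k<\theta(\mu)\leq\alpha^{k+1}\},\qquad k\in\mathbb{Z}.
\]
The claim applied with arbitrarily large $t$ ensures that $\{\theta=+\infty\}\cap A$ either has Lebesgue measure zero, or forces $\nu(A)=+\infty$ (in which case the conclusion is trivial); in the remaining case the sets $B_k$ partition $A$ up to a Lebesgue-null set, the claim with $t=\alpha^k$ yields $\nu(B_k)\geq\alpha^k\Leb(B_k)$, and therefore
\[
\int_A\theta\,d\mu\leq\sum_{k\in\mathbb{Z}}\alpha^{k+1}\Leb(B_k)\leq\alpha\sum_{k\in\mathbb{Z}}\nu(B_k)\leq\alpha\,\nu(A),
\]
so that sending $\alpha\to 1^{+}$ gives the desired inequality.

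The main delicate point I anticipate is the application of the Vitali covering theorem to the one-sided intervals $[\mu-r,\mu]$, which are not centred at $\mu$: one either invokes the general fine-cover version of Vitali, or equivalently observes that $[\mu-r,\mu]$ is the closed ball of radius $r/2$ around $\mu-r/2$, which is enough for the classical ball formulation of the covering theorem to apply. A secondary routine issue is the bookkeeping required to isolate the extremal sets $\{\theta=0\}$ and $\{\theta=+\infty\}$, handled as above.
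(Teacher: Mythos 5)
Your proof is correct and is exactly the argument the paper has in mind: the paper omits the proof of Lemma~\ref{lemma:disintegration}, noting only that it is ``the classical proof based on Vitali's covering theorem,'' and your Vitali-cover claim together with the dyadic decomposition of $A$ by the level sets of $\theta$ is precisely that classical argument. The observation that $[\mu-r,\mu]$ is the closed ball of radius $r/2$ centred at $\mu-r/2$ correctly handles the one-sidedness, and the measurability reduction to rational $r$ via the right-continuity of $r\mapsto\nu([\mu-r,\mu])/r$ is sound.
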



The last result is a multi-variable inequality with some combinatorial flavor.

\begin{lemma}\label{lemma:olimpico}

Let $m\geq 2$ be an integer, and let $(r_{1},\ldots,r_{m})\in[0,1]^{m}$ be real numbers with sum $S\geq 1$.

Then it turns out that
\begin{equation}
\sum_{1\leq i<j\leq m}r_{i}r_{j}\geq S-1.
\label{th:olimpico}
\end{equation}

\end{lemma}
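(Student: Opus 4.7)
The plan is to fix $S := \sum_{i=1}^m r_i \geq 1$ and to show that the minimum of $Q(r_1,\ldots,r_m) := \sum_{1 \leq i < j \leq m} r_i r_j$ on the compact set $K_S := \{(r_1,\ldots,r_m) \in [0,1]^m : \sum_i r_i = S\}$ is at least $S - 1$. Existence of a minimizer is automatic by compactness and continuity, so the strategy is to characterize its shape and then compute.

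First, I would argue by a smoothing step that at any minimizer at most one coordinate lies in the open interval $(0,1)$. Suppose by contradiction that two coordinates $r_i \leq r_j$ both lie strictly in $(0,1)$. For $\varepsilon > 0$ sufficiently small, the substitution $r_i \mapsto r_i - \varepsilon$, $r_j \mapsto r_j + \varepsilon$ remains feasible. Writing
\begin{equation}
Q = \sum_{\substack{k<l \\ \{k,l\} \cap \{i,j\} = \emptyset}} r_k r_l \;+\; (r_i + r_j) \sum_{k \notin \{i,j\}} r_k \;+\; r_i r_j,
\nonumber
\end{equation}
the first two summands are unaffected by the substitution (since $r_i + r_j$ is preserved), while the last one decreases by exactly $\varepsilon(r_j - r_i) + \varepsilon^2 \geq \varepsilon^2 > 0$, contradicting minimality.

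In the reduced regime the minimizer consists of $k$ coordinates equal to~$1$, at most one coordinate equal to some $t \in [0,1]$, and the remaining coordinates equal to~$0$, whence $S = k + t$ and $Q = \binom{k}{2} + k t$. A short algebraic rearrangement turns the target inequality $Q \geq S - 1$ into
\begin{equation}
(k-1)\left(\frac{k-2}{2} + t\right) \geq 0,
\nonumber
\end{equation}
which holds case by case: for $k \geq 2$ both factors are nonnegative; for $k = 1$ the left-hand side vanishes; and for $k = 0$ the assumption $S \geq 1$ forces $t = 1$, making both sides zero.

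The main point requiring attention is the degenerate subcase $r_i = r_j$ of the smoothing argument, where the first-order contribution vanishes; it is precisely the quadratic term $\varepsilon^2$ that produces the strict decrease needed to iterate the reduction down to at most one interior coordinate and thus reach the explicit configurations handled above.
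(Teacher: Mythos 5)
Your proof is correct, but it takes a genuinely different route from the paper. The paper proves the lemma by induction on $m$: the base case $m=2$ is settled by observing that $\sigma \mapsto \sigma(S-\sigma)$ is decreasing for $\sigma \geq S/2$, and the inductive step splits on whether $R := r_1 + \cdots + r_m$ is $\leq 1$ or $\geq 1$, using only the $m=2$ case respectively the inductive hypothesis. Your argument is variational: existence of a minimizer by compactness, a smoothing perturbation to show that a minimizer has at most one coordinate in the open interval $(0,1)$, and then a direct computation on the resulting extremal configurations $(k$ ones, one coordinate $t$, the rest zeros$)$. Both are complete; the inductive proof is shorter and avoids any appeal to compactness, while your smoothing argument makes the extremal structure explicit and explains \emph{why} equality holds exactly at configurations of the form $(1,\ldots,1,t,0,\ldots,0)$ with $k \in \{0,1\}$ or $(1,1,0,\ldots,0)$. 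One small stylistic remark: your perturbation step is a one-shot contradiction (a minimizer cannot have two interior coordinates), so no iteration is actually needed, and the identity $(k-1)\bigl(\tfrac{k-2}{2}+t\bigr) = \binom{k}{2} + kt - (k+t-1)$ is indeed the correct algebraic reduction.
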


\begin{proof}

We argue by induction on $m$. Let us start with the case $m=2$ and let us assume, without loss of generality, that $r_{1}\geq r_{2}$. Since $S/2\leq r_{1}\leq 1$, and the function $\sigma\to\sigma(S-\sigma)$ is decreasing for $\sigma\geq S/2$, we obtain that
\begin{equation}
r_{1}r_{2}=r_{1}(S-r_{1})\geq 1(S-1),
\nonumber
\end{equation}
which proves (\ref{th:olimpico}) in this case.

Let us assume now that the conclusion holds true for some positive integer $m$, and let us consider $m+1$ real numbers $r_{1},\ldots,r_{m},r_{m+1}$ in $[0,1]$. Let us set $R:=r_{1}+\ldots+r_{m}$, and let us distinguish two cases.

\begin{itemize}

\item  If $R\leq 1$, then considering only the products with $j=m+1$ we find that
\begin{equation}
\sum_{1\leq i<j\leq m+1}r_{i}r_{j}\geq
R\cdot r_{m+1}=
R(S-R)\geq
S-1,
\nonumber
\end{equation}
where the last inequality follows from the case $m=2$.

\item  If $R\geq 1$, then we apply the inductive assumption to the sum of products that do not involve $r_{m+1}$, and we obtain that
\begin{equation}
\sum_{1\leq i<j\leq m+1}r_{i}r_{j}=
R\cdot r_{m+1}+\sum_{1\leq i<j\leq m}r_{i}r_{j}\geq
r_{m+1}+(R-1)=
S-1,
\nonumber
\end{equation}
which proves (\ref{th:olimpico}) also in the second case.
\qedhere

\end{itemize}

\end{proof}


\setcounter{equation}{0}
\section{Local expansion vs global contraction}\label{sec:loc-vs-glob}

Let $A\subseteq\re$ be a subset, and let $\varphi:A\to\re$ be a function. For every pair $(x,y)\in A^{2}$ with $x<y$, let
\begin{equation}
R\varphi(x,y):=\frac{\varphi(y)-\varphi(x)}{y-x}
\label{defn:dq}
\end{equation}
denote the corresponding difference quotient. For every subset $E\subseteq\re$ we consider the set
\begin{equation}
Z(\varphi,A,E):=\left\{(x,y)\in A^{2}:x<y,\ R\varphi(x,y)\in E\right\}
\label{defn:Z}
\end{equation}
of all pairs of points in $A$ that give rise to a difference quotient in $E$. The following result is the technical core of this paper.

\begin{thm}[Local expansion vs global contraction]\label{thm:loc-vs-glob}

Let $A\subseteq\re$ be a bounded measurable set, and let $\varphi:A\to\re$ be a measurable function.

Let us assume that the approximate derivative of $\varphi$ exists for almost every $x\in A$ and
\begin{equation}
|\ap\varphi'(x)|\geq 1
\qquad
\text{for almost every }x\in A.
\label{hp:loc-exp}
\end{equation}

Then it turns out that
\begin{equation}
\liminf_{\delta\to 0^{+}}\frac{1}{\delta}\iint_{Z(\varphi,A,[0,\delta])}\frac{1}{y-x}\,dx\,dy\geq
\Leb(A)-\Leb_{*}(\varphi(A)),
\label{th:loc-vs-glob}
\end{equation}
where the integration set $Z(\varphi,A,[0,\delta])$ is defined by (\ref{defn:Z}), and
\begin{equation}
\Leb_{*}(\varphi(A)):=\sup\{\Leb(K): K\text{ compact, }K\subseteq\varphi(A)\}
\nonumber
\end{equation}
denotes the inner Lebesgue measure of the image $\varphi(A)$.

\end{thm}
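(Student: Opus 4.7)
The plan is to reduce to a regular case via a Lusin-type approximation for approximately differentiable functions, apply the coarea formula to the difference-quotient map $\Psi(x,y):=(\varphi(y)-\varphi(x))/(y-x)$, parametrize its zero level set by the common value $c=\varphi(x)=\varphi(y)$, and then combine this with Lemma~\ref{lemma:olimpico} and the classical area formula. More precisely, for every $\ep>0$ a Lusin-type theorem (see~\cite[3.1.16]{federer:GMT}) provides a compact set $A_{\ep}\subseteq A$ with $\Leb(A\setminus A_{\ep})<\ep$ and a function $\widetilde{\varphi}\in C^{1}(\re)$ such that $\widetilde{\varphi}=\varphi$ and $\widetilde{\varphi}'=\ap\varphi'$ on $A_{\ep}$; in particular $|\widetilde{\varphi}'|\geq 1$ there, and $\widetilde{\varphi}(A_{\ep})$ is a compact subset of $\varphi(A)$. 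Since $Z(\varphi,A_{\ep},[0,\delta])\subseteq Z(\varphi,A,[0,\delta])$, it is enough to prove the analogous estimate for $(A_{\ep},\widetilde{\varphi})$ and let $\ep\to 0^{+}$ at the end.

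The $C^{1}$ map $\Psi$ on $\{x<y\}$ satisfies
\[
|\nabla\Psi|^{2}=\frac{(\widetilde{\varphi}'(x)-\Psi)^{2}+(\widetilde{\varphi}'(y)-\Psi)^{2}}{(y-x)^{2}}\geq\frac{2}{(y-x)^{2}}
\quad\text{on }\Psi^{-1}(0)\cap A_{\ep}^{2},
\]
so $Z(\widetilde{\varphi},A_{\ep},[0,\delta])$ stays uniformly away from the diagonal for $\delta<1$, and the coarea formula yields
\[
\iint_{Z(\widetilde{\varphi},A_{\ep},[0,\delta])}\frac{dx\,dy}{y-x}=\int_{0}^{\delta}I(t)\,dt,
\quad
I(t):=\int_{\Psi^{-1}(t)\cap A_{\ep}^{2}\cap\{x<y\}}\frac{d\mathcal{H}^{1}}{(y-x)\,|\nabla\Psi|}.
\]
Parametrizing $\Psi^{-1}(0)$ by $c:=\widetilde{\varphi}(x)=\widetilde{\varphi}(y)$, the arc-length element equals $\sqrt{\widetilde{\varphi}'(x)^{-2}+\widetilde{\varphi}'(y)^{-2}}\,dc$, and after simplification the integrand collapses to $|\widetilde{\varphi}'(x)\widetilde{\varphi}'(y)|^{-1}\,dc$; denoting by $x_{1}(c)<\cdots<x_{N(c)}(c)$ the preimages of $c$ in $A_{\ep}$ and setting $r_{k}(c):=1/|\widetilde{\varphi}'(x_{k}(c))|\in (0,1]$, this gives
\[
I(0)=\int_{\re}\sum_{1\leq i<j\leq N(c)}r_{i}(c)\,r_{j}(c)\,dc.
\]

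Writing $S(c):=\sum_{k}r_{k}(c)$, Lemma~\ref{lemma:olimpico} (applied when $S(c)\geq 1$, and trivial otherwise) yields $\sum_{i<j}r_{i}(c)\,r_{j}(c)\geq (S(c)-1)^{+}$, while the one-dimensional area formula applied to $\widetilde{\varphi}|_{A_{\ep}}$ (using $|\widetilde{\varphi}'|\geq 1$ there) gives both $\int_{\re}S(c)\,dc=\Leb(A_{\ep})$ and $\int_{\{N(c)\geq 1\}}dc=\Leb(\widetilde{\varphi}(A_{\ep}))$. These combine into
\[
I(0)\geq\int_{\re}(S(c)-1)^{+}\,dc\geq\int_{\{N(c)\geq 1\}}(S(c)-1)\,dc=\Leb(A_{\ep})-\Leb(\widetilde{\varphi}(A_{\ep}))\geq\Leb(A)-\Leb_{*}(\varphi(A))-\ep,
\]
and a lower-semicontinuity argument showing $\liminf_{t\to 0^{+}}I(t)\geq I(0)$ then produces $\liminf_{\delta\to 0^{+}}\delta^{-1}\int_{0}^{\delta}I(t)\,dt\geq I(0)$. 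Letting $\ep\to 0^{+}$ concludes the proof.

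The hardest step is likely to be the semicontinuity bound $\liminf_{t\to 0^{+}}I(t)\geq I(0)$: the integration domain $A_{\ep}^{2}\cap\Psi^{-1}(t)$ depends on the merely measurable set $A_{\ep}$, so one needs to rule out loss of mass as the level curves $\Psi^{-1}(t)$ slide off $A_{\ep}^{2}$. A natural remedy is to work branch-by-branch at Lebesgue density points of $A_{\ep}$: for each pair $(x_{i}(c),x_{j}(c))$ of preimages, approximate differentiability yields a family of small rectangles contained in $Z(\widetilde{\varphi},A_{\ep},[0,\delta])$ whose contribution to $\delta^{-1}\iint(y-x)^{-1}\,dx\,dy$ approaches $1/|\widetilde{\varphi}'(x_{i})\widetilde{\varphi}'(x_{j})|$, and the full lower bound on $I(0)$ is recovered by a Fatou-type summation.
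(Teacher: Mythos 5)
Your coarea-formula rewriting is a genuinely nice reformulation: parametrizing $\Psi^{-1}(0)$ by the common level $c=\widetilde\varphi(x)=\widetilde\varphi(y)$, the cancellations you compute do collapse the integrand to $|\widetilde\varphi'(x)\widetilde\varphi'(y)|^{-1}\,dc$, your $S(c)$ and $\sum_{i<j}r_i r_j$ correspond precisely to the paper's cumulative distribution derivative $M'_{g,K}(I,z)$ and the quantity $P(z)$, and Lemma~\ref{lemma:olimpico} plus the one-dimensional area formula give exactly the lower bound $I(0)\geq\Leb(A_\ep)-\Leb(\widetilde\varphi(A_\ep))$. Up to notation, this identifies the same algebraic object the paper reaches.

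However, the step $\liminf_{t\to 0^{+}} I(t)\geq I(0)$ is a genuine gap, not a technicality, and I believe it is false in general. For $t$ slightly positive, the slice $I(t)$ integrates over $\Psi^{-1}(t)\cap A_\ep^2\cap\{x<y\}$, and since $A_\ep$ is merely a compact subset of a measurable set (typically totally disconnected), the level curve $\Psi^{-1}(t)$ can fall into the ``gaps'' of $A_\ep^2$ for a set of $t$ accumulating at $0$; nothing forces $I(t)$ to be close to $I(0)$ pointwise. What must actually be shown is an estimate on the \emph{average} $\delta^{-1}\int_0^\delta I(t)\,dt$, and for that the relevant fact is that $A_\ep^2$ has planar density one at the points $(x_i(c),x_j(c))$, so the fibers over $t\in[0,\delta]$ carry the right amount of mass \emph{on average} even if individual fibers degenerate. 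Your closing ``remedy'' (small rectangles at density points whose contribution tends to $1/|\widetilde\varphi'(x_i)\widetilde\varphi'(x_j)|$, followed by a Fatou-type summation in $c$) is exactly the content of Proposition~\ref{prop:disintegration-1}: there the rectangles $R_{i,j}(\ep,\delta,r)$ are constructed explicitly, the $\ep\to 0^{+}$ limit is taken first to get a lower bound on the density in the $z$-variable for each fixed small $\delta$, and then Lemma~\ref{lemma:disintegration} converts that pointwise density bound into the integral bound, uniformly in $\delta<\delta_0$, so that $\delta\to 0^{+}$ can be taken at the end. The order of the three limits ($\ep\to 0$, integration in $z$, $\delta\to 0$) is the whole point, and your single coarea slicing in $t$ hides rather than resolves it. As written, the proposal is incomplete: to make it rigorous you would have to carry out essentially the same double disintegration, and the coarea-formula presentation, while conceptually attractive, does not yield a shortcut.
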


\begin{rmk}
\begin{em}

Theorem~\ref{thm:loc-vs-glob} above is significant only when the right-hand side of (\ref{th:loc-vs-glob}) is positive, namely when $\Leb_{*}(\varphi(A))<\Leb(A)$. In this case we say that $\varphi$ is a \emph{global contraction}, in the sense that the measure of the image is less than the measure of the domain, and a \emph{local expansion} because of assumption~(\ref{hp:loc-exp}).

The intuitive idea, supported by the area formula, is that the combination of the global contraction and of the local expansion assumptions should induce some sort of non-injectivity of $\varphi$, which in turn should force the set $Z(\varphi,A,[0,\delta])$ to be large enough even for small values of $\delta$. The toy example is the case where $A:=(0,1+\ell)$ for some $\ell\in(0,1)$, and
\begin{equation}
\varphi(x):=
\begin{cases}
x      & \text{if }x\in(0,1), \\
x-1    & \text{if }x\in(1,1+\ell).
\end{cases}
\nonumber
\end{equation}

In this case it turns out that $\varphi(A)=(0,1)$, and a direct computation shows that we have equality in (\ref{th:loc-vs-glob}).

\end{em}
\end{rmk}

In the sequel of this section we develop the theory that leads to a proof of Theorem~\ref{thm:loc-vs-glob} above.


\subsection{The cumulative distribution}

Let us introduce some notation. Let $g:\re\to\re$ be a function of class $C^{1}$, and let $K\subseteq\re$ be a compact set. For every $z\in\re$ we consider the \emph{level set}
\begin{equation}
L_{g,K}(z):=\{x\in K:g(x)=z\}.
\label{defn:level-set}
\end{equation}

For every interval $I\subseteq\re$ we call \emph{cumulative distribution} the function $M_{g,K}(I,z)$ defined by
\begin{equation}
M_{g,K}(I,z):=\Leb(\{x\in I\cap K:g(x)\leq z\})
\qquad \forall z\in\re.
\label{defn:M(I,z)}
\end{equation}

Finally we consider the set $K_{0}$ of all points in $K$ whose density with respect to $K$ is not equal to~1, and we define the set of \emph{regular values} as
\begin{equation}
\reg(g,K):=\re\setminus g(K_{0}).
\label{defn:reg}
\end{equation}

We observe that
\begin{equation}
\Leb(\re\setminus\reg(g,K))=\Leb(g(K_{0}))=0
\nonumber
\end{equation}
because it is well-known that $\Leb(K_{0})=0$, and Lipschitz functions (in this case the restriction of $g$ to any bounded set) map sets with zero Lebesgue measure into sets with zero Lebesgue measure.

In the following result we collect the main properties of the function $M_{g,K}(I,z)$.

\begin{prop}[Properties of the cumulative distribution]\label{prop:M(I,z)}

Let $g:\re\to\re$ be a function of class $C^{1}$, and let $K\subseteq\re$ be a compact set such that
\begin{equation}
g'(x)\neq 0
\qquad
\forall x\in K.
\label{hp:g'-neq-0}
\end{equation}

Then the set $\reg(g,K)$ defined by (\ref{defn:reg}), the level set $L_{g,K}(z)$ defined by (\ref{defn:level-set}), the cumulative distribution $M_{g,K}(I,z)$ defined by (\ref{defn:M(I,z)}), and its derivative $M_{g,K}'(I,z)$ with respect to $z$ have the following properties. 
\begin{enumerate}
\renewcommand{\labelenumi}{(\arabic{enumi})}

\item  For every $z\in\re$ the level set $L_{g,K}(z)$ is finite (possibly empty), and in particular its Lebesgue measure is equal to zero.

\item  For every interval $I\subseteq\re$ the function $z\mapsto M_{g,K}(I,z)$ is absolutely continuous.

\item  If for some $z\in\re$ the closure of the interval $I$ does not intersect the level set $L_{g,K}(z)$, then $M_{g,K}'(I,z)=0$.

\item  If $x_{0}\in L_{g,K}(z)$ for some $z\in\reg(g,K)$, and $I$ is an open interval that contains $x_{0}$, but whose closure does not contain any other element of $L_{g,K}(z)$, then
\begin{equation}
M_{g,K}'(I,z)=\frac{1}{|g'(x_{0})|}.
\label{th:M-single}
\end{equation}

\item  If the open interval $I$ contains $K$, and for some $z\in\reg(g,K)$ the level set $L_{g,K}(z)$ consists of $m$ distinct points $x_{1}$, \ldots, $x_{m}$ , then
\begin{equation}
M_{g,K}'(I,z)=\sum_{i=1}^{m}\frac{1}{|g'(x_{i})|}.
\label{th:M'-glob}
\end{equation}

\end{enumerate}

\end{prop}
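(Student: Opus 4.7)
The plan is to exploit throughout the uniform nondegeneracy of $g'$ on the compact set $K$. Since $g'$ is continuous and nonzero on $K$, there exists $\nu_{0}>0$ with $|g'(x)|\geq\nu_{0}$ for every $x\in K$, and by continuity I can find a \emph{bounded} open neighborhood $V\supseteq K$ on which $|g'|\geq\nu_{0}/2$. Only finitely many connected components $V_{1},\ldots,V_{N}$ of $V$ intersect the compact set $K$, and on each $V_{k}$ the function $g$ is strictly monotone with inverse Lipschitz of constant at most $2/\nu_{0}$. Statement~(1) is then immediate, since each $V_{k}$ contains at most one solution of $g(x)=z$, so $L_{g,K}(z)$ has at most $N$ elements. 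For statement~(2), given $z_{1}<z_{2}$ the increment $M_{g,K}(I,z_{2})-M_{g,K}(I,z_{1})$ is the measure of $A:=\{x\in I\cap K:z_{1}<g(x)\leq z_{2}\}$, and on each monotone piece the set $A\cap V_{k}$ is contained in $g|_{V_{k}}^{-1}((z_{1},z_{2}])$, whose measure is at most $(2/\nu_{0})(z_{2}-z_{1})$; summing over $k$ yields a uniform Lipschitz estimate, hence absolute continuity.

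For statement~(3), I would note that $\overline{I}\cap K$ is compact (because $K$ is), so its image $g(\overline{I}\cap K)$ is compact and, by hypothesis, avoids the point $z$. There is therefore $\epsilon>0$ with $g(\overline{I}\cap K)\cap(z-\epsilon,z+\epsilon)=\emptyset$, so no difference of the form $M_{g,K}(I,w_{2})-M_{g,K}(I,w_{1})$ with $w_{1}<w_{2}$ in $(z-\epsilon,z+\epsilon)$ can be positive. Thus $M_{g,K}(I,\cdot)$ is constant on this neighborhood of $z$ and its derivative there vanishes. For statement~(4), apply the inverse function theorem to produce a neighborhood $U\subseteq I$ of $x_{0}$ on which $g$ is a $C^{1}$ diffeomorphism onto its image; assume $g'(x_{0})>0$ (the other case is symmetric). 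A compactness argument analogous to~(3) shows that for $|h|$ small enough, every $x\in I\cap K$ with $g(x)\in[z-|h|,z+|h|]$ lies in $U$: otherwise a subsequence of such points would converge (in the compact set $\overline{I}\cap K$) to an element of $L_{g,K}(z)\cap\overline{I}$ distinct from $x_{0}$, contradicting the hypothesis. Consequently, for such $h>0$, $M_{g,K}(I,z+h)-M_{g,K}(I,z)$ equals $\Leb(K\cap(x_{0},\xi(h)])$, where $\xi(h):=g|_{U}^{-1}(z+h)$ satisfies $\xi(h)-x_{0}=h/g'(x_{0})+o(h)$. Since $z\in\reg(g,K)$ forces $x_{0}\notin K_{0}$, the set $K$ has density $1$ at $x_{0}$, which gives $\Leb(K\cap(x_{0},\xi(h)])=(h/g'(x_{0}))(1+o(1))$; the same computation for $h\to 0^{-}$ yields~(\ref{th:M-single}).

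Finally, statement~(5) follows by a partitioning argument. I would choose $\epsilon>0$ small enough that the open intervals $I_{i}:=(x_{i}-\epsilon,x_{i}+\epsilon)$ are pairwise disjoint and contained in $I$, and set $J:=I\setminus\bigcup_{i}\overline{I_{i}}$, which is a finite union of intervals $J_{1},\ldots,J_{p}$ whose closures avoid $L_{g,K}(z)$. Since the finitely many endpoints $x_{i}\pm\epsilon$ have Lebesgue measure zero,
\begin{equation}
M_{g,K}(I,z)=\sum_{\ell=1}^{p}M_{g,K}(J_{\ell},z)+\sum_{i=1}^{m}M_{g,K}(I_{i},z);
\nonumber
\end{equation}
statement~(3) applied to each $J_{\ell}$ and statement~(4) applied to each $I_{i}$ then give~(\ref{th:M'-glob}).

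I expect the main obstacle to be the derivative computation in~(4), where the inverse function theorem (producing a $C^{1}$ parametrization of $g^{-1}((z,z+h])$ near $x_{0}$) must be combined delicately with the density-$1$ property of $K$ at $x_{0}$ (ensuring that this parametrization captures essentially all of $I\cap K$ in a neighborhood of $x_{0}$). This is precisely the point at which the definition of $\reg(g,K)$ enters in an essential way; the remaining steps are routine compactness, Lipschitz, and partitioning arguments.
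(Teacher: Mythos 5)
Your proof is correct, follows the same outline (items (1)--(5) in order), but employs genuinely different mechanisms in several places, most notably in (1), (2), and (4).

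For (1) and (2) you set up a global framework: a bounded open neighborhood $V\supseteq K$ on which $|g'|\geq\nu_{0}/2$, decomposed into finitely many components on each of which $g$ is strictly monotone with Lipschitz inverse. This gives (1) immediately (at most one preimage per component) and, for (2), yields that $z\mapsto M_{g,K}(I,z)$ is in fact \emph{Lipschitz} with constant $2N/\nu_{0}$, which is a sharper conclusion than the absolute continuity claimed. The paper instead proves (1) by contradiction via an accumulation point where $g'$ would have to vanish, and proves (2) by interpreting $\partial_{z}M_{g,K}(I,\cdot)$ as the pushforward measure $\nu(A)=\Leb(g^{-1}(A)\cap K\cap I)$ and invoking Lemma~\ref{lemma:g(B)} ($C^{1}$ functions with nonvanishing derivative send positive-measure sets to positive-measure sets) to show this measure has no singular part. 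Your route is more elementary and self-contained, as it does not rely on Lemma~\ref{lemma:g(B)}.

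For (4) you use the inverse function theorem to produce the $C^{1}$ parametrization $\xi(h)=g|_{U}^{-1}(z+h)=x_{0}+h/g'(x_{0})+o(h)$, combine the compactness argument (to trap all relevant points in $U$) with one-sided density~$1$ of $K$ at $x_{0}$, and read off the derivative in one stroke. The paper instead proves matching $\liminf/\limsup$ bounds by hand through explicit $\ep$-inclusions such as $S_{g,K}(I,[z,z+h])\supseteq K\cap(x_{0},x_{0}+h/(g'(x_{0})+\ep))$; the two approaches are computationally equivalent, but yours packages the local linearization more cleanly. Statements (3) and (5) are handled essentially as in the paper: a compactness/gap argument for (3) and a finite partition of $I$ into pieces falling under (3) or (4) for (5). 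One small point worth spelling out in (5): the additivity $M_{g,K}(I,z)=\sum_{\ell}M_{g,K}(J_{\ell},z)+\sum_{i}M_{g,K}(I_{i},z)$ holds up to a finite (hence null) set of endpoints, which you correctly note, and the choice of $\ep$ must also be small enough that no endpoint $x_{i}\pm\ep$ coincides with a different $x_{j}$; this is implicit in ``pairwise disjoint'' but deserves a word.
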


\begin{proof}

For every interval $I\subseteq\re$ and every subset $E\subseteq\re$ we introduce the set
\begin{equation}
S_{g,K}(I,E):=\{x\in K\cap I:g(x)\in E\},
\nonumber
\end{equation}
and we observe that
\begin{equation}
M_{g,K}'(I,z)=
\lim_{h\to 0^{+}}\frac{\Leb(S_{g,K}(I,[z,z+h]))}{h}=
\lim_{h\to 0^{+}}\frac{\Leb(S_{g,K}(I,[z-h,h]))}{h},
\label{eqn:M'-S}
\end{equation}
provided that the two limits exist. Now we prove the five statements separately.

\paragraph{\textmd{\textit{Statement~(1)}}}

Let us assume by contradiction that the level set $L_{g,K}(z)$ is infinite for some $z\in\re$. Then it admits an accumulation point, which belongs to the same level set. In this point the derivative of $g$ has to vanish, which contradicts assumption (\ref{hp:g'-neq-0}).

\paragraph{\textmd{\textit{Statement~(2)}}}

We observe that the function $z\mapsto M_{g,K}(I,z)$ is non-decreasing, and its derivative is the measure $\nu$ such that $\nu(A)=\Leb(g^{-1}(A)\cap K\cap I)$ for every measurable set $A\subseteq\re$. 

If we assume by contradiction that $M_{g,K}(I,z)$ is not absolutely continuous, then there exists a measurable set $A\subseteq\re$ with $\Leb(A)=0$ such that the set $B:=g^{-1}(A)\cap K\cap I$ has positive Lebesgue measure. This set $B$ contradicts Lemma~\ref{lemma:g(B)}. 

\paragraph{\textmd{\textit{Statement~(3)}}}

We prove a stronger statement, namely that the set 
\begin{equation}
S_{g,K}(I,[z-h,z+h])=\{x\in I\cap K:z-h\leq g(x)\leq z+h\}
\nonumber
\end{equation}
is empty when $h>0$ is small enough. 

Indeed, if we assume by contradiction that this is not the case, then we find a sequence $h_{n}\to 0^{+}$ and a sequence $\{x_{n}\}\subseteq K\cap I$ such that $z-h_{n}\leq g(x_{n})\leq z+h_{n}$ for every $n\geq 1$. Up to subsequences, we can assume that $x_{n}$ converges to some point $x_{\infty}$. This point lies both in $K$ and in the closure of $I$, and satisfies $g(x_{\infty})=z$. But this contradicts the assumption that the closure of $I$ does not intersect the level set $L_{g,K}(z)$.

\paragraph{\textmd{\textit{Statement~(4)}}}

We assume without loss of generality that $g'(x_{0})>0$ (the other case is symmetric), and we show that for every $\ep\in(0,g'(x_{0}))$ it turns out that
\begin{equation}
\liminf_{h\to 0^{+}}\frac{S_{g,K}(I,[z,z+h])}{h}\geq
\frac{1}{g'(x_{0})+\ep},
\label{th:M'>=}
\end{equation}
and
\begin{equation}
\limsup_{h\to 0^{+}}\frac{S_{g,K}(I,[z,z+h])}{h}\leq
\frac{1}{g'(x_{0})-\ep}.
\label{th:M'<=}
\end{equation}

Since analogous inequalities can be proved also for $S_{g,K}(I,[z-h,z])$, letting $\ep\to 0^{+}$ this implies (\ref{th:M-single}) because of (\ref{eqn:M'-S}). Now we prove the two estimates separately.

\begin{itemize}

\item  The key point in the proof of (\ref{th:M'>=}) is showing that
\begin{equation}
S_{g,K}(I,[z,z+h])\supseteq 
K\cap\left(x_{0},x_{0}+\frac{h}{g'(x_{0})+\ep}\right)
\label{th:inclusion>=}
\end{equation}
when $h$ is small enough. Indeed, from this inclusion it follows that
\begin{eqnarray*}
\liminf_{h\to 0^{+}}\frac{\Leb(S_{g,K}(I,[z,z+h]))}{h} & \geq &
\lim_{h\to 0^{+}}
\frac{\Leb\left(K\cap\left(x_{0},x_{0}+\dfrac{h}{g'(x_{0})+\ep}\right)\right)}{h}
\\[1ex]
& = &
\frac{1}{g'(x_{0})+\ep},
\end{eqnarray*}
where the last equality follows from the fact that $x_{0}$ has density~1 with respect to $K$, which in turn follows from the assumption that $z\in\reg(g,K)$.

In order to prove (\ref{th:inclusion>=}) we exploit that $g$ is of class $C^{1}$, and in particular there exists $r_{0}>0$ such that $(x_{0},x_{0}+r_{0})\subseteq I$ and
\begin{equation}
g(x_{0})\leq g(x)\leq g(x_{0})+(g'(x_{0})+\ep)(x-x_{0})
\qquad
\forall x\in(x_{0},x_{0}+r_{0}).
\nonumber
\end{equation}

It follows that, when $h<(g'(x_{0})+\ep)r_{0}$, it turns out that
\begin{equation}
z\leq g(x)\leq z+(g'(x_{0})+\ep)\frac{h}{g'(x_{0})+\ep}=z+h
\nonumber
\end{equation}
for every
\begin{equation}
x\in\left(x_{0},x_{0}+\frac{h}{g'(x_{0})+\ep}\right),
\nonumber
\end{equation}
which implies (\ref{th:inclusion>=}).

\item  In order to prove (\ref{th:M'<=}) it is enough to show that
\begin{equation}
S_{g,K}(I,[z,z+h])\subseteq 
\left[x_{0},x_{0}+\frac{h}{g'(x_{0})-\ep}\right]
\nonumber
\end{equation}
when $h>0$ is small enough. 

To this end, let us assume by contradiction that this is not the case. Then there exist a sequence $h_{n}\to 0^{+}$, and a sequence $\{x_{n}\}\subseteq K\cap I$ that for every $n\geq 1$ satisfies $z\leq g(x_{n})\leq z+h_{n}$ and exactly one of the following two inequalities
\begin{equation}
x_{n}>x_{0}+\frac{h_{n}}{g'(x_{0})-\ep},
\qquad\qquad
x_{n}<x_{0}.
\label{xn-x0}
\end{equation}

Up to subsequences, we can assume that $x_{n}$ satisfies always the same inequality in (\ref{xn-x0}), and that $x_{n}$ has a limit $x_{\infty}$. This limit lies in $K$ and in the closure of $I$, and in addition $g(x_{\infty})=z$. On the other hand, we know that the unique element of the level set $L_{g,K}(z)$ that belongs to the closure of $I$ is $x_{0}$, and therefore necessarily $x_{n}\to x_{0}$. At this point (\ref{xn-x0}) implies that one of the following two relations
\begin{equation}
\frac{g(x_{n})-g(x_{0})}{x_{n}-x_{0}}\leq
\frac{h_{n}}{x_{n}-x_{0}}<
g'(x_{0})-\ep,
\qquad\quad
\frac{g(x_{n})-g(x_{0})}{x_{n}-x_{0}}<0
\nonumber
\end{equation}
is true for every $n\geq 1$. Since the left-hand sides tend to $g'(x_{0})$ as $n\to +\infty$, both of them lead to an absurd (we assumed at the beginning that $g'(x_{0})>0$).

\end{itemize}

\paragraph{\textmd{\textit{Statement~(5)}}}

It is enough to write $I$ as the union of a finite number of subintervals, each of which fits into the framework of either statement~(3) or statement~(4).
\end{proof}


\subsection{A representation of the liminf in terms of the cumulative distribution}

In this subsection we estimate from below the
\begin{equation}
\liminf_{\delta\to 0^{+}}\frac{1}{\delta}
\iint_{Z(g,K,[0,\delta])}\frac{1}{y-x}\,dx\,dy
\label{defn:int-g-K}
\end{equation}
with an integral in one variable that involves the derivative of the cumulative density. The result follows from a sort of disintegration of the double integral in (\ref{defn:int-g-K}) with respect to the function $g(x)$. To this end, for every subset $E\subseteq\re$ we consider the set 
\begin{equation}
Z(g,K,[0,\delta],E):=\{(x,y)\in Z(g,K,[0,\delta]):g(x)\in E\},
\nonumber
\end{equation}
and we estimate from below the asymptotic behavior of the integral of $1/(y-x)$ over this set when $E$ is a small interval.

\begin{prop}[Density estimate for the disintegration]\label{prop:disintegration-1}

Let $g:\re\to\re$ be a function of class $C^{1}$, and let $K\subseteq\re$ be a compact set satisfying (\ref{hp:g'-neq-0}).

Let $c\in(0,1)$, and let $z\in\reg(g,K)$. Let us assume that the level set $L_{g,K}(z)$ consists of $m$ elements $x_{1}<\ldots<x_{m}$, with $m\geq 2$. 

Then there exists a real number $\delta_{0}>0$ such that
\begin{equation}
\liminf_{\ep\to 0^{+}}\frac{1}{\ep}
\iint_{Z(g,K,[0,\delta],[z-\ep,z])}\frac{1}{y-x}\,dx\,dy\geq
(1-c)^{3}P(z)\delta
\qquad
\forall\delta\in(0,\delta_{0}),
\label{est:main-delta-0}
\end{equation}
where
\begin{equation}
P(z):=\sum_{1\leq i<j\leq m}\frac{1}{|g'(x_{i})|}\cdot\frac{1}{|g'(x_{j})|}.
\label{defn:P(z)}
\end{equation}

\end{prop}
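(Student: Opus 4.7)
The plan is to restrict the double integral to a union of product sets $A_i(\ep)\times Y_{ij}(\delta)$ indexed by pairs $i<j$, each of which captures the contribution coming from $x$ near $x_i$ and $y$ near $x_j$. As a first step I would pick pairwise disjoint open intervals $I_1,\ldots,I_m$ with $x_i\in I_i$, ordered in the same way as the $x_i$'s, small enough that on $\overline{I_i}$ the derivative $g'$ has constant sign with $(1-c)|g'(x_i)|\leq|g'|\leq(1+c)|g'(x_i)|$, and so that for every $i<j$ and every $(x,y)\in I_i\times I_j$ one has $(1-c)(x_j-x_i)\leq y-x\leq(1+c)(x_j-x_i)$. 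By item~(4) of Proposition~\ref{prop:M(I,z)} each $I_i$ satisfies $M'_{g,K}(I_i,z)=1/|g'(x_i)|$; combining this with the absolute continuity of item~(2) produces a threshold $\ep_{0}>0$ such that the set
\begin{equation*}
A_i(\ep):=\bigl\{x\in K\cap I_i:z-\ep<g(x)\leq z\bigr\}
\end{equation*}
has Lebesgue measure at least $(1-c)\ep/|g'(x_i)|$ for every $\ep\in(0,\ep_{0})$ and every $i$.

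Second, for each pair $i<j$ I would introduce the $x$-independent auxiliary set
\begin{equation*}
Y_{ij}(\delta):=\bigl\{y\in I_j\cap K:g(y)\in[z,\,z+(1-2c)\delta(x_j-x_i)]\bigr\},
\end{equation*}
(with $c$ replaced by a strictly smaller value if necessary, so that $1-2c>0$). A direct verification using $g(x)\leq z\leq g(y)$ and the bounds on $y-x$ shows that, as soon as $\ep\leq c\delta(x_j-x_i)$, every pair $(x,y)\in A_i(\ep)\times Y_{ij}(\delta)$ lies in $Z(g,K,[0,\delta],[z-\ep,z])$. Moreover, $g|_{I_j}$ is a $C^{1}$ diffeomorphism with $|g'|\leq(1+c)|g'(x_j)|$, so the preimage of the target interval under $g|_{I_j}$ is an interval based at $x_j$ of length at least $(1-c)(1-2c)\delta(x_j-x_i)/|g'(x_j)|$; since $z\in\reg(g,K)$ implies that $x_j$ has density one in $K$, I can fix $\delta_{0}>0$ so small that this interval meets $K$ in a set of measure at least $(1-c)$ times its length for every $\delta\in(0,\delta_{0})$.

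Combining the three estimates with the pointwise bound $1/(y-x)\geq 1/((1+c)(x_j-x_i))\geq(1-c)/(x_j-x_i)$, the contribution of the pair $(i,j)$ to the double integral is at least $c_{0}(c)\,\ep\,\delta/(|g'(x_i)|\,|g'(x_j)|)$, where $c_{0}(c)$ is an explicit polynomial in $(1-c)$. Summing over $i<j$, dividing by $\ep$, and taking $\liminf_{\ep\to 0^{+}}$ yields a bound of the form $c_{0}(c)\,P(z)\,\delta$. Since all the tolerances introduced in the construction of the $I_i$'s can be shrunk to an arbitrarily small fraction of the prescribed $c$, a final relabeling absorbs $c_{0}(c)$ into $(1-c)^{3}$ and gives the desired inequality. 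The main technical obstacle is the density step in the second paragraph: the interval $[x_j,\tilde y]$ whose intersection with $K$ must be estimated has an endpoint that depends on $\delta$, and the only available tool for the lower bound is the density-one property of $x_j$ in $K$, which is precisely where the hypothesis $z\in\reg(g,K)$ enters.
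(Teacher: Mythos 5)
Your proposal is correct and follows essentially the same route as the paper's proof. Both arguments decompose the double integral over products of sets localized near the pairs $(x_i,x_j)$ with $i<j$ of level-set points, verify (via the inequality $\ep\le c\delta(x_j-x_i)$) that these products sit inside $Z(g,K,[0,\delta],[z-\ep,z])$, bound $1/(y-x)$ from below by $(1-c)/(x_j-x_i)$, and then estimate the two Lebesgue measures through the density at $z$ of the cumulative distribution $M_{g,K}$ at $x_i$ and $x_j$; summing over pairs and sending $\ep\to 0^+$ produces $(1-c)^{3}P(z)\delta$ after adjusting the auxiliary constants. The only cosmetic difference is that the paper applies statement~(4) of Proposition~\ref{prop:M(I,z)} directly to both factors (introducing the quotients $G(\ep)$ and $H(\delta)$ and passing to the limit), whereas you re-derive the estimate for the $y$-factor by hand from the $C^{1}$ bounds on $g'$ over $I_j$ and the one-sided density of $K$ at $x_j$; in both cases the hypothesis $z\in\reg(g,K)$ enters at exactly the same point, namely to guarantee that $x_j$ (resp.\ $x_i$) has density one in $K$.
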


\begin{proof}

Let us set
\begin{equation}
\tau:=\min\{x_{j}-x_{i}:1\leq i<j\leq m\},
\nonumber
\end{equation}
and let us choose a positive real number $r$ such that
\begin{equation}
\frac{4r}{\tau}\leq c.
\label{defn:r}
\end{equation}

This condition implies in particular that $r<\tau/4$, and hence the intervals $[x_{i}-r,x_{i}+r]$ are pairwise disjoint for $i=1,\ldots,m$. 

For every pair of indices $1\leq i<j\leq m$ we consider the product set 
\begin{eqnarray*}
R_{i,j}(\ep,\delta,r) & := &
S_{g,K}((x_{i}-r,x_{i}+r),[z-\ep,z])\times
\\[0.5ex]
& & 
\mbox{}\times S_{g,K}((x_{j}-r,x_{j}+r),[z,z+(1-c)\delta(x_{j}-x_{i}-2r)]).
\end{eqnarray*}

We observe that these sets are pairwise disjoint, and we claim that they satisfy
\begin{equation}
R_{i,j}(\ep,\delta,r)\subseteq
Z(g,K,[0,\delta],[z-\ep,z])
\qquad
\forall\ep\in(0,c\delta(\tau-2r)).
\nonumber
\end{equation}

To this end, the only thing to check is that the difference quotients $Rg(x,y)$ of the function $g$ satisfy $0\leq Rg(x,y)\leq\delta$ for every $(x,y)\in R_{i,j}(\ep,\delta,r)$. The estimate from below is true because $g(y)\geq z\geq g(x)$ for every admissible value of $x$ and $y$. The estimate from above follows from the chain of inequalities
\begin{eqnarray*}
g(y)-g(x) & = &
(g(y)-z)-(g(x)-z)
\\
& \leq &
(1-c)\delta(x_{j}-x_{i}-2r)+\ep
\\
& \leq &
(1-c)\delta(x_{j}-x_{i}-2r)+c\delta(\tau-2r)
\\
& \leq &
(1-c)\delta(y-x)+c\delta(y-x)
\\
& = &
\delta(y-x).
\end{eqnarray*}

At this point we know that
\begin{equation}
\iint_{Z(g,K,[0,\delta],[z-\ep,z])}\frac{1}{y-x}\,dx\,dy\geq
\sum_{1\leq i<j\leq m}\iint_{R_{i,j}(\ep,\delta,r)}\frac{1}{y-x}\,dx\,dy,
\nonumber
\end{equation}
and therefore it is enough to show that, when $\delta>0$ is small enough, the estimate
\begin{equation}
\liminf_{\ep\to 0^{+}}\frac{1}{\ep}\iint_{R_{i,j}(\ep,\delta,r)}\frac{1}{y-x}\,dx\,dy\geq
(1-c)^{3}\delta\frac{1}{|g'(x_{i})|}\cdot\frac{1}{|g'(x_{j})|}
\label{th:int-Rij}
\end{equation}
holds true for every $1\leq i<j\leq m$. In order to estimate the integral, we observe that (\ref{defn:r}) implies that
\begin{equation}
\frac{x_{j}-x_{i}-2r}{x_{j}-x_{i}+2r}=
1-\frac{4r}{x_{j}-x_{i}+2r}\geq
1-\frac{4r}{\tau}\geq
1-c,
\nonumber
\end{equation}
and therefore
\begin{equation}
\frac{1}{y-x}\geq
\frac{1}{x_{j}-x_{i}+2r}\geq
\frac{1-c}{x_{j}-x_{i}-2r}
\qquad
\forall (x,y)\in R_{i,j}(\ep,\delta,r).
\nonumber
\end{equation}

Since the two dimensional Lebesgue measure of $R_{i,j}(\ep,\delta,r)$ is 
\begin{eqnarray*}
\Leb^{2}(R_{i,j}(\ep,\delta,r)) & = &
\Leb\left(S_{g,K}((x_{i}-r,x_{i}+r),[z-\ep,z])\strut\right)\cdot
\\[0.5ex]
& & 
\mbox{}\cdot\Leb\left(S_{g,K}((x_{j}-r,x_{j}+r),[z,z+(1-c)\delta(x_{j}-x_{i}-2r)])\strut\right),
\end{eqnarray*}
we obtain that
\begin{equation}
\frac{1}{\ep}\iint_{R_{i,j}(\ep,\delta,r)}\frac{1}{y-x}\,dx\,dy\geq
(1-c)\cdot G(\ep)\cdot H(\delta)\cdot(1-c)\delta,
\nonumber
\end{equation}
where
\begin{equation}
G(\ep):=\frac{\Leb\left(S_{g,K}((x_{i}-r,x_{i}+r),[z-\ep,z])\strut\right)}{\ep},
\nonumber
\end{equation}
and
\begin{equation}
H(\delta):=\frac{\Leb\left(S_{g,K}((x_{j}-r,x_{j}+r),[z,z+(1-c)\delta(x_{j}-x_{i}-2r)])\strut\right)}{(1-c)\delta(x_{j}-x_{i}-2r)}.
\nonumber
\end{equation}

Now from statement~(4) of Proposition~\ref{prop:M(I,z)} we deduce that
\begin{equation}
\lim_{\delta\to 0}H(\delta)=
M_{g,K}'((x_{j}-r,x_{j}+r),z)=
\frac{1}{|g'(x_{j})|},
\nonumber
\end{equation}
and therefore when $\delta$ is small enough it turns out that
\begin{equation}
H(\delta)\geq\frac{1-c}{|g'(x_{j})|}.
\nonumber
\end{equation}

More precisely, there exists $\delta_{0}>0$ such that the estimate
\begin{equation}
\frac{1}{\ep}\iint_{R_{i,j}(\ep,\delta,r)}\frac{1}{y-x}\,dx\,dy\geq
(1-c)\cdot G(\ep)\cdot\frac{1-c}{|g'(x_{j})|}\cdot(1-c)\delta
\nonumber
\end{equation}
holds true for every $\delta\in(0,\delta_{0})$, every $1\leq i<j\leq m$, and every $\ep\in(0,c\delta(\tau-2r))$. At this point we can let $\ep\to 0^{+}$, and exploiting again statement~(4) of Proposition~\ref{prop:M(I,z)} we conclude that
\begin{equation}
\lim_{\ep\to 0}G(\ep)=
M_{g,K}'((x_{i}-r,x_{i}+r),z)=
\frac{1}{|g'(x_{i})|},
\nonumber
\end{equation}
which completes the proof of (\ref{th:int-Rij}).
\end{proof}


When we let $\delta\to 0$ in Proposition~\ref{prop:disintegration-1} we obtain the following estimate from below for (\ref{defn:int-g-K}).

\begin{prop}[Liminf vs cumulative distribution]\label{prop:disintegration-2}

Let $g:\re\to\re$ be a function of class $C^{1}$, and let $K\subseteq\re$ be a compact set such that
\begin{equation}
|g'(x)|\geq 1
\qquad
\forall x\in K.
\label{hp:g'>=1}
\end{equation}

Let $I\subseteq\re$ be an open interval that contains $K$, and let us consider the set $\reg(g,K)$ defined by (\ref{defn:reg}) and the function $M_{g,K}(I,z)$ defined by (\ref{defn:M(I,z)}).

Then it turns out that
\begin{equation}
\liminf_{\delta\to 0^{+}}
\frac{1}{\delta}\iint_{Z(g,K,[0,\delta])}\frac{1}{y-x}\,dx\,dy\geq
\int_{\reg(g,K)}[M_{g,K}'(I,z)-1]_{+}\,dz,
\nonumber
\end{equation}
where $[\alpha]_{+}:=\max\{\alpha,0\}$ denotes the positive part of $\alpha$.

\end{prop}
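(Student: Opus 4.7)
The plan is to disintegrate the double integral with respect to the value $g(x)$, apply Proposition~\ref{prop:disintegration-1} slicewise, and then recombine via Lemma~\ref{lemma:olimpico} and Fatou's lemma. For every $\delta>0$ I introduce the Borel measure on $\re$ defined by
\[
\nu_\delta(A) := \iint_{Z(g,K,[0,\delta])\cap\{g(x)\in A\}} \frac{1}{y-x}\,dx\,dy,
\]
so that $\nu_\delta(\re)$ is precisely the double integral on the left-hand side of the conclusion. Lemma~\ref{lemma:disintegration} then provides $\nu_\delta(\re) \geq \int_\re \theta_\delta(z)\,dz$, where $\theta_\delta(z) := \liminf_{\ep\to 0^+} \nu_\delta([z-\ep,z])/\ep$ is the lower density.

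Next I fix $c\in(0,1)$ and extract a pointwise lower bound on $\theta_\delta(z)/\delta$ for $z\in\reg(g,K)$. By statement~(1) of Proposition~\ref{prop:M(I,z)} the level set $L_{g,K}(z)$ is finite, say with $m$ points $x_1<\ldots<x_m$. If $m\geq 2$, Proposition~\ref{prop:disintegration-1} yields $\delta_0=\delta_0(z)>0$ such that $\theta_\delta(z)\geq (1-c)^3 P(z)\delta$ for every $\delta\in(0,\delta_0)$, hence $\liminf_{\delta\to 0^+}\theta_\delta(z)/\delta \geq (1-c)^3 P(z)$. Setting $r_i := 1/|g'(x_i)|$, assumption (\ref{hp:g'>=1}) gives $r_i\in(0,1]$; statement~(5) of Proposition~\ref{prop:M(I,z)} gives $M_{g,K}'(I,z)=\sum_{i=1}^{m}r_i =: S$; and (\ref{defn:P(z)}) reads $P(z)=\sum_{1\leq i<j\leq m} r_i r_j$. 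When $S\geq 1$, Lemma~\ref{lemma:olimpico} gives $P(z)\geq S-1 = [M_{g,K}'(I,z)-1]_+$; when $S<1$ the positive part vanishes and the inequality is automatic. For $m\leq 1$ one has $M_{g,K}'(I,z)\leq 1$ directly, so $[M_{g,K}'(I,z)-1]_+=0$ and there is nothing to prove. In all cases
\[
\liminf_{\delta\to 0^+} \frac{\theta_\delta(z)}{\delta} \geq (1-c)^3 \bigl[M_{g,K}'(I,z)-1\bigr]_+ \quad \forall z\in\reg(g,K).
\]

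To conclude I select a sequence $\delta_n\to 0^+$ along which $\nu_{\delta_n}(\re)/\delta_n$ realizes the liminf in the statement. Using Lemma~\ref{lemma:disintegration} and then Fatou's lemma applied to the nonnegative integrands $\theta_{\delta_n}/\delta_n$, together with the fact that $\liminf_n f(\delta_n)\geq \liminf_{\delta\to 0^+} f(\delta)$ pointwise and that $\re\setminus\reg(g,K)$ is negligible,
\[
\liminf_{\delta\to 0^+}\frac{\nu_\delta(\re)}{\delta} \geq \liminf_n \int_\re \frac{\theta_{\delta_n}(z)}{\delta_n}\,dz \geq \int_\re \liminf_n \frac{\theta_{\delta_n}(z)}{\delta_n}\,dz \geq (1-c)^3\int_{\reg(g,K)}[M_{g,K}'(I,z)-1]_+\,dz,
\]
and letting $c\to 0^+$ gives the claim. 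The main subtle point is that the threshold $\delta_0(z)$ in Proposition~\ref{prop:disintegration-1} depends on the geometry of the level set, hence on $z$, so no uniform-in-$z$ estimate of the form $\theta_\delta(z)\geq C(z)\delta$ valid for a single $\delta$ is available; this is precisely why one must pass pointwise to the $\liminf$ in $\delta$ before integrating in $z$ via Fatou.
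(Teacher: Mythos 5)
Your proof is correct and follows the same overall strategy as the paper: introduce the measure $\nu_\delta$, invoke Lemma~\ref{lemma:disintegration} to pass to the lower density, estimate that density pointwise via Proposition~\ref{prop:disintegration-1}, convert $P(z)$ into $[M_{g,K}'(I,z)-1]_+$ with Lemma~\ref{lemma:olimpico}, and finally let $c\to 0^+$. The one genuine divergence is in the step where the liminf in $\delta$ is interchanged with the integral in $z$, which is necessary because the threshold $\delta_0$ in Proposition~\ref{prop:disintegration-1} depends on $z$. You do this by pointwise passage to the liminf in $\delta$ followed by Fatou's lemma along a minimizing sequence $\delta_n\to 0^+$. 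The paper instead fixes $\delta_0$, integrates over the set $\reg(g,K,c,\delta_0)$ of values $z$ for which the density estimate already holds for all $\delta\in(0,\delta_0)$, and only afterwards sends $\delta_0\to 0^+$, using that these sets increase to $\reg(g,K)$. The two recombination arguments buy slightly different things: your Fatou route avoids having to establish measurability of the auxiliary sets $\reg(g,K,c,\delta_0)$ (a point the paper explicitly spares the reader), relying instead only on the measurability of $\theta_\delta$ supplied directly by Lemma~\ref{lemma:disintegration}; the paper's exhaustion argument avoids Fatou and makes the monotone structure in $\delta_0$ explicit. Both are valid, and yours is arguably the more economical of the two.
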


\begin{proof}

The proof follows from the combination of two estimates. In Step~1 we show that
\begin{equation}
\liminf_{\delta\to 0^{+}}
\frac{1}{\delta}\iint_{Z(g,K,[0,\delta])}\frac{1}{y-x}\,dx\,dy\geq
\int_{\reg(g,K)}P(z)\,dz,
\label{th:liminf-P(z)}
\end{equation}
where $P(z)$ is defined by (\ref{defn:P(z)}) if the level set $L_{g,K}(z)$ has at least two elements, and $P(z):=0$ otherwise. In Step~2 we show that, under assumption (\ref{hp:g'>=1}), it turns out that
\begin{equation}
P(z)\geq[M_{g,K}'(I,z)-1]_{+}
\qquad
\forall z\in\reg(g,K)
\label{th:P>=M}
\end{equation}
for every open interval $I\subseteq\re$ that contains $K$.

\paragraph{\textmd{\textit{Step~1}}}

Let us prove (\ref{th:liminf-P(z)}). To this end, for every $c\in(0,1)$ and every $\delta_{0}>0$, we introduce the set $\reg(g,K,c,\delta_{0})$ of all $z\in\reg(g,K)$ such that (\ref{est:main-delta-0}) holds true. We spare the reader the long but rather standard verification that this set is actually measurable. Then we introduce the measure $\nu_{\delta}$ in $\re$ defined by
\begin{equation}
\nu_{\delta}(M):=\frac{1}{\delta}\iint_{Z(g,K,[0,\delta],M)}\frac{1}{y-x}\,dx\,dy
\nonumber
\end{equation}
for every Borel subset $M\subseteq\re$, and we interpret (\ref{est:main-delta-0}) as an estimate from below for the density of $\nu_{\delta}$ with respect to the Lebesgue measure. Thus from  Lemma~\ref{lemma:disintegration} we deduce that
\begin{equation}
\frac{1}{\delta}\iint_{Z(g,K,[0,\delta])}\frac{1}{y-x}\,dx\,dy=
\nu_{\delta}(\re)\geq
(1-c)^{3}\int_{\reg(g,K,c,\delta_{0})}P(z)\,dz
\nonumber
\end{equation}
for every $\delta\in(0,\delta_{0})$. Since the right-hand side does not depend on $\delta$, letting $\delta\to 0^{+}$ in the left-hand side we deduce that
\begin{equation}
\liminf_{\delta\to 0^{+}}
\frac{1}{\delta}\iint_{Z(g,K,[0,\delta])}\frac{1}{y-x}\,dx\,dy\geq
(1-c)^{3}\int_{\reg(g,K,c,\delta_{0})}P(z)\,dz
\nonumber
\end{equation}
for every admissible value of $c$ and $\delta_{0}$.
On the other hand, from Proposition~\ref{prop:disintegration-1} we know that, for every fixed  $c\in(0,1)$, every value $z\in\reg(g,K)$ belongs to $\reg(g,K,c,\delta_{0})$ when $\delta_{0}$ is small enough. Therefore, letting $\delta_{0}\to 0^{+}$ we obtain that
\begin{equation}
\liminf_{\delta\to 0^{+}}
\frac{1}{\delta}\iint_{Z(g,K,[0,\delta])}\frac{1}{y-x}\,dx\,dy\geq
(1-c)^{3}\int_{\reg(g,K)}P(z)\,dz.
\nonumber
\end{equation}

Since $c\in(0,1)$ is arbitrary, this proves (\ref{th:liminf-P(z)}).

\paragraph{\textmd{\textit{Step~2}}}

Let us prove (\ref{th:P>=M}). To begin with, we observe that the inequality is trivial if $M_{g,K}'(I,z)\leq 1$. Otherwise, from (\ref{th:M'-glob}) and assumption (\ref{hp:g'>=1}), we deduce that the level set $L_{g,K}(z)$ has at least two elements, in which case $P(z)$ is given by (\ref{defn:P(z)}). 

At this point we can apply Lemma~\ref{lemma:olimpico} with $r_{i}:=|g'(x_{i})|^{-1}$ and exploiting again (\ref{th:M'-glob}) we obtain (\ref{th:P>=M}).
\end{proof}


\subsection{Proof of Theorem~\ref{thm:loc-vs-glob}}

\paragraph{\textmd{\textit{Choice of a smooth function and a compact set}}}

Let $\eta$ be a positive real number. We claim that there exist a compact set $K\subseteq A$ and a function $g\in C^{1}(\re)$ such that
\begin{gather}
\Leb(K)\geq\Leb(A)-\eta,
\label{est:Leb-K}
\\
g(x)=\varphi(x)
\qquad
\forall x\in K,
\label{eqn:g=phi}
\\
|g'(x)|\geq 1
\qquad
\forall x\in K.
\label{eqn:g'-K}
\end{gather}

The construction of $K$ requires two steps. To begin with, we apply the result of~\cite[Theorem~3.1.16]{federer:GMT} and we find a compact set $\hatK\subseteq A$ and a function $g\in C^{1}(\re)$ such that $\Leb(A\setminus\hatK)\leq\eta/2$ and $\varphi(x)=g(x)$ for every $x\in\hatK$. Then we consider the set $\hatK_{1}$ of all points in $\hatK$ that have density~1 with respect to $\hatK$ (but a positive upper density would be enough), and the set $A_{1}$ of points in $A$ where $|\ap\varphi'(x)|\geq 1$.

We observe that $\Leb(\hatK)=\Leb(\hatK_{1}\cap A_{1})$, and that
\begin{equation}
g'(x)=\ap\varphi'(x)
\qquad
\forall x\in\hatK_{1}\cap A_{1},
\nonumber
\end{equation}
and in particular $|g'(x)|\geq 1$ in this set. At this point we conclude by choosing a compact set $K\subseteq\hatK_{1}\cap A_{1}$ such that $\Leb((\hatK_{1}\cap A_{1})\setminus K)\leq\eta/2$.

\paragraph{\textmd{\textit{Conclusion}}}

From (\ref{eqn:g=phi}) we deduce that $Z(\varphi,A,[0,\delta])\supseteq Z(g,K,[0,\delta])$, and hence
\begin{equation}
\iint_{Z(\varphi,A,[0,\delta])}\frac{1}{y-x}\,dx\,dy\geq
\iint_{Z(g,K,[0,\delta])}\frac{1}{y-x}\,dx\,dy
\qquad
\forall\delta>0.
\nonumber
\end{equation}

Thanks to (\ref{eqn:g'-K}) we can apply Proposition~\ref{prop:disintegration-2} and deduce that
\begin{equation}
\liminf_{\delta\to 0^{+}}\frac{1}{\delta}
\iint_{Z(g,K,[0,\delta])}\frac{1}{y-x}\,dx\,dy\geq
\int_{\reg(g,K)}[M_{g,K}'(I,z)-1]_{+}\,dz,
\nonumber
\end{equation}
where $I$ is any open interval that contains $K$. In order to compute the last integral we observe that
\begin{equation}
\int_{\reg(g,K)}[M_{g,K}'(I,z)-1]_{+}\,dz=
\int_{\re}[M_{g,K}'(I,z)-1]_{+}\,dz=
\int_{g(K)}[M_{g,K}'(I,z)-1]_{+}\,dz,
\nonumber
\end{equation}
where the first equality follows from $\Leb(\re\setminus\reg(g,K))=0$, and the second equality is true because $M_{g,K}'(I,z)=0$ when $z\not\in g(K)$. It remains to estimate the last integral, for which we obtain that
\begin{equation}
\int_{g(K)}[M_{g,K}'(I,z)-1]_{+}\,dz\geq
\int_{g(K)}(M_{g,K}'(I,z)-1)\,dz=
\int_{g(K)}M_{g,K}'(I,z)\,dz-\Leb(g(K)).
\nonumber
\end{equation}

Finally, we choose any interval $(-R,R)$ that is big enough to contain $g(K)$ and, exploiting again that $M_{g,K}'(I,z)=0$ when $z\not\in g(K)$, we deduce that
\begin{equation}
\int_{g(K)}M_{g,K}'(I,z)\,dz=
\int_{-R}^{R}M_{g,K}'(I,z)\,dz=
M_{g,K}(I,R)-M_{g,K}(I,-R)=
\Leb(K).
\nonumber
\end{equation} 

Putting things together we have proved that
\begin{equation}
\liminf_{\delta\to 0^{+}}\frac{1}{\delta}
\iint_{Z(\varphi,A,[0,\delta])}\frac{1}{y-x}\,dx\,dy\geq
\Leb(K)-\Leb(g(K))\geq
\Leb(A)-\eta-\Leb_{*}(\varphi(A)),
\nonumber
\end{equation}
where the last inequality follows from (\ref{est:Leb-K}) and from the fact that the compact set $g(K)$ is a competitor in the definition of $\Leb_{*}(\varphi(A))$.

Since $\eta>0$ is arbitrary, this completes the proof of (\ref{th:loc-vs-glob}).
\qed



\setcounter{equation}{0}
\section{Proof of Theorem~\ref{thm:AD} and Corollaries}\label{sec:proof-AD}

This section is organized as follows. First of all, in subsection~\ref{sec:sectioning} we apply an integral-geometric technique in order to reduce ourselves to dimension one, namely to consider functions $u$ that are defined in some interval $(a,b)\subseteq\re$ and admit an approximate derivative  at almost every $x\in(a,b)$. In this case, exactly one of the following two possibilities happens:
\begin{itemize}

\item  either there exists a set of positive measure where $\ap u'(x)\neq 0$,

\item  or $\ap u'(x)=0$ for almost every $x\in(a,b)$.

\end{itemize}

We deal with the first possibility in subsection~\ref{sec:proof-lip}, where we show that in this case it turns out that $\Fom(u,(a,b))=+\infty$ thanks to assumption (\ref{hp:omega>0}). We deal with the second possibility in subsection~\ref{sec:proof-BV}, where we show that, if $u$ is a non-constant function, then $\Fom(u,(a,b))=+\infty$ thanks to assumption (\ref{hp:omega-int}). This completes the proof of Theorem~\ref{thm:AD}.

Finally, in subsection~\ref{sec:cor} we deduce the three corollaries stated in the introduction.


\subsection{Reduction to dimension one}\label{sec:sectioning}

Since ``being constant'' is a local property and $\Omega$ is connected, it is enough to prove that $u(x)$ is essentially constant in every ball contained in $\Omega$. Therefore, up to translations and homotheties, we can assume that $\Omega$ is $B_{d}(0,1)$, namely the unit ball with center in the origin of $\re^{d}$.

Now we introduce the family of one-dimensional sections of $u$. Let $S^{d-1}$ denote the unit sphere with center in the origin of $\re^{d}$. For every $v\in S^{d-1}$, let $P_{v}$ denote the projection of $B_{d}(0,1)$ onto the hyperplane passing through the origin and orthogonal to $v$. For every $z\in P_{v}$, let us set $L_{z}:=(1-\|z\|^{2})^{1/2}$, and let us consider the function $u_{v,z}:(-L_{z},L_{z})\to\re$ defined by
\begin{equation}
u_{v,z}(t):=u(z+tv)
\qquad
\forall t\in (-L_{z},L_{z}).
\label{defn:1D-sections}
\end{equation}

In the sequel we need the following key properties of these sections.

\begin{itemize}

\item  (Integral-geometric representation of the functional). For every measurable function $g:B_{d}(0,1)^{2}\to[0,+\infty)$, a change of variables in the integral shows that
\begin{equation}
\iint_{B_{d}(0,1)^{2}}g(x,y)\,dx\,dy=
\frac{1}{2}\int_{S^{d-1}}dv\int_{P_{v}}dz\int_{(-L_{z},L_{z})^{2}}g(z+vt,z+vs)|t-s|^{d-1}\,dt\,ds,
\nonumber
\end{equation}
and in particular
\begin{equation}
\Fom(u,B_{d}(0,1))=\frac{1}{2}\int_{S^{d-1}}dv\int_{P_{v}}\Fom(u_{v,z},(-L_{z},L_{z}))\,dz.
\label{eqn:int-geom}
\end{equation}

In other words, the functional (\ref{defn:F-omega}) computed in a function $u$ of $d$ variables is some sort of average of the same functional computed on the one-dimensional sections $u_{v,z}$ of $u$.

\item  (Approximate differentiability of sections). If $u$ is approximately differentiable for almost every $x\in B_{d}(0,1)$, then for every $v\in S^{d-1}$ it turns out that, for almost every $z\in P_{v}$, the one-dimensional section $u_{v,z}(t)$ is approximately differentiable at $t$ for almost every $t$ in $(-L_{z},L_{z})$. This fact can be deduced in a rather standard way from the characterization of approximately differentiable functions as those functions that coincide with functions of class $C^{1}$ up to a set with arbitrarily small measure (see~\cite[Theorem~3.1.16]{federer:GMT}).

\item  (Characterization of constant functions through their sections). If for almost every $v\in S^{d-1}$ it turns out that for almost every $z\in P_{v}$ the one-dimensional section $u_{v,z}(t)$ is essentially constant in the interval $(-L_{z},L_{z})$, then $u(x)$ is essentially constant in $B_{d}(0,1)$. This property can be proved as an exercise in measure theory, or even deduced from (\ref{eqn:int-geom}) in the following way: if ``almost every'' section is constant, then the right-hand side is zero, and hence also the left-hand side is zero, and this is possible only if $u$ is constant. An analogous problem is presented in the final Lemma of~\cite{2002-brezis} and in~\cite[Lemma~2]{1999-TMNA-BreLiMirNir}.

\end{itemize}


\subsection{When the approximate derivative is not identically zero}\label{sec:proof-lip}

In this section we show that $\Fom(u,(a,b))=+\infty$ provided that the positivity assumption (\ref{hp:omega>0}) is satisfied, and the approximate derivative of $u$ exists, and is different from~0, on a subset of $(a,b)$ with positive measure. To be more precise, we prove actually a stronger result, where we assume just that the difference quotients of $u$ remain ``often enough'' uniformly bounded away from~0 and infinity on a set with positive measure (and this happens if $u$ is approximately differentiable with $\ap u'(x)\neq 0$ on a set with positive measure).

\begin{thm}\label{thm:u'>0}

Let $\omega:[0,+\infty)\to[0,+\infty)$ be a continuous function that satisfies the positivity assumption (\ref{hp:omega>0}). Let $u:(a,b)\to\re$ be a measurable function, and let $Ru(x,y)$ denote its difference quotients defined in analogy with (\ref{defn:dq}).

Let us assume that there exist two real numbers $0<\mu_{1}<\mu_{2}$, and a measurable set $A\subseteq(a,b)$, such that
\begin{enumerate}
\renewcommand{\labelenumi}{(\roman{enumi})}

\item  $\Leb(A)>0$,

\item  for every $x\in A$, the set
\begin{equation}
E(x):=\left\{h\in(0,b-x):\mu_{1}\leq|Ru(x,x+h)|\leq\mu_{2}\right\}
\nonumber
\end{equation}
has positive upper density in~$0$, namely
\begin{equation}
\limsup_{r\to 0^{+}}\frac{\Leb\left(E(x)\cap(0,r)\strut\right)}{r}
>0
\qquad
\forall x\in A.
\nonumber
\end{equation}

\end{enumerate} 

Then it turns out that $\Fom(u,(a,b))=+\infty$.

\end{thm}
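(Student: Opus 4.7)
The plan is to use Fubini's theorem to rewrite $\Fom(u,(a,b))$ as an iterated integral, and then to show that for every $x\in A$ the inner integral is already $+\infty$. Since $\Leb(A)>0$, this alone forces $\Fom(u,(a,b))=+\infty$.

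First I would write
\begin{equation}
\Fom(u,(a,b))=\int_{a}^{b}dx\int_{a}^{b}\omega(|Ru(x,y)|)\cdot\frac{1}{|y-x|}\,dy,
\nonumber
\end{equation}
fix $x\in A$, and bound the inner integral from below by restricting to $y>x$ and performing the change of variable $h=y-x$, so that it becomes $\int_{0}^{b-x}\omega(|Ru(x,x+h)|)(1/h)\,dh$. Next I would further restrict the domain of integration to the subset $E(x)\subseteq(0,b-x)$ provided by assumption~(ii).

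The key observation is that, by the very definition of $E(x)$, the argument of $\omega$ lies in the compact interval $[\mu_{1},\mu_{2}]\subseteq(0,+\infty)$ whenever $h\in E(x)$. Since $\omega$ is continuous and strictly positive on $(0,+\infty)$ by (\ref{hp:omega>0}), the quantity
\begin{equation}
c_{0}:=\min_{\mu\in[\mu_{1},\mu_{2}]}\omega(\mu)
\nonumber
\end{equation}
is a strictly positive real number. It follows that the inner integral is bounded below by $c_{0}\int_{E(x)}(1/h)\,dh$.

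Finally, since $E(x)$ has positive upper density at $0$ by assumption~(ii), Lemma~\ref{lemma:pos-dens} applies directly and yields $\int_{E(x)}(1/h)\,dh=+\infty$. Hence the inner integral is $+\infty$ for every $x\in A$, and integrating in $x$ over the set $A$ (of positive measure) gives $\Fom(u,(a,b))=+\infty$. The only genuinely nontrivial ingredient is the divergence of $\int_{E(x)} dh/h$, which is exactly the content of Lemma~\ref{lemma:pos-dens}; the rest is routine bookkeeping of the double integral, so I do not expect any real obstacle here.
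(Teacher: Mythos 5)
Your proof is correct and follows essentially the same route as the paper's: change of variables $h=y-x$, restriction to $E(x)$, bounding $\omega$ from below by its positive minimum on $[\mu_1,\mu_2]$, and then invoking Lemma~\ref{lemma:pos-dens} to get divergence of the inner integral on a set of $x$ of positive measure. No gaps.
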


\begin{proof}

Let us set
\begin{equation}
\omega_{0}:=\min\left\{\omega(\sigma):\mu_{1}\leq\sigma\leq\mu_{2}\right\},
\nonumber
\end{equation}
and let us observe that $\omega_{0}>0$ because of assumption (\ref{hp:omega>0}). Now with the change of variable $h=y-x$ we obtain that
\begin{equation}
\Fom(u,(a,b))\geq
\int_{A}dx\int_{E(x)}\frac{\omega(|Ru(x,x+h)|)}{h}\,dh\geq
\omega_{0}\int_{A}dx\int_{E(x)}\frac{1}{h}\,dh.
\nonumber
\end{equation}

Since $E(x)$ has positive upper density in~0, from Lemma~\ref{lemma:pos-dens} we deduce that the integral over $E(x)$ diverges for every $x\in A$. Since $A$ has positive Lebesgue measure, the double integral diverges.
\end{proof}



\subsection{When the approximate derivative vanishes identically}\label{sec:proof-BV}

In this section we show that $\Fom(u,(a,b))=+\infty$ provided that $\omega$ satisfies (\ref{hp:omega-int}), the function $u$ is non-constant, and its approximate derivative vanishes almost everywhere in $(a,b)$. To this end, we consider the difference quotients $Ru(x,y)$ defined in analogy with (\ref{defn:dq}), and the function $\gamma(\mu)$ defined by (\ref{defn:gamma(mu)}). The key point is establishing inequality (\ref{th:gamma>omega}) for suitable positive values of $c_{0}$ and $\mu_{0}$, because then we can apply Lemma~\ref{lemma:disintegration} to the measure $\nu$ on $\re$ defined by 
\begin{equation}
\nu(M):=\iint_{Z(u,(a,b),M)}\frac{\omega(|Ru(x,y)|)}{y-x}\,dx\,dy
\nonumber
\end{equation}
for every Borel subset $M\subseteq\re$, and deduce that
\begin{equation}
\Fom(u,(a,b))\geq
\nu([\mu_{0},+\infty))\geq
c_{0}\int_{\mu_{0}}^{+\infty}\frac{\omega(\mu)}{\mu^{2}}\,d\mu,
\nonumber
\end{equation}
from which the conclusion follows because of assumption (\ref{hp:omega-int}). Moreover, since
\begin{equation}
\iint_{Z(u,(a,b),[\mu-\delta,\mu])}\frac{\omega(|Ru(x,y)|)}{y-x}\,dx\,dy\geq
\min_{\sigma\in[\mu-\delta,\mu]}\omega(\sigma)\cdot\iint_{Z(u,(a,b),[\mu-\delta,\mu])}\frac{1}{y-x}\,dx\,dy,
\nonumber
\end{equation}
the proof of (\ref{th:gamma>omega}) is equivalent to showing that
\begin{equation}
\liminf_{\delta\to 0^{+}}\frac{1}{\delta}
\iint_{Z(u,(a,b),[\mu-\delta,\mu])}\frac{1}{y-x}\,dx\,dy\geq
\frac{c_{0}}{\mu^{2}}
\qquad
\forall\mu\geq\mu_{0}.
\label{th:c0-mu0}
\end{equation}
and this is the point where Theorem~\ref{thm:loc-vs-glob} comes into play.

Let us introduce some notation. Since the function $u(x)$ is non-constant, there exist two Lebesgue points where it has distinct values. Up to restricting the interval, we can always assume that the two points are the endpoints of the interval. In addition, up to adding a suitable constant and/or changing the sign, we can also assume that $u$ is negative in $a$ and positive in $b$, and more precisely that there exists a positive real number $J$ such that
\begin{equation}
\aplimsup_{x\to a^{+}}u(x)<-J<J<\apliminf_{x\to b^{-}}u(x).
\label{hp:J}
\end{equation}

Let us set $M:=\|u\|_{L^{\infty}((a,b))}$, and let us choose $\eta_{0}\in(0,(b-a)/2)$ such that the set
\begin{equation}
\mathcal{S}(\eta):=\{x\in(a,a+\eta):u(x)> -J\}\cup\{x\in(b-\eta,b):u(x)< J\}
\nonumber
\end{equation}
satisfies
\begin{equation}
\Leb(\mathcal{S}(\eta))\leq\frac{J}{M+J}\,\eta
\qquad
\forall\eta\in(0,\eta_{0}).
\label{defn:eta-0}
\end{equation}

We point out that $\eta_{0}$ exists because of (\ref{hp:J}). We claim that (\ref{th:c0-mu0}) holds true with
\begin{equation}
c_{0}:=J,
\qquad\qquad
\mu_{0}:=\frac{M+J}{\eta_{0}}.
\nonumber
\end{equation}

To this end, for every $\mu>\mu_{0}$ we consider the set $\mathcal{S}(\eta)$ corresponding to 
\begin{equation}
\eta:=\frac{M+J}{\mu},
\label{defn:eta}
\end{equation}
then we introduce the set
\begin{equation}
A_{\mu}:=(a,b)\setminus\mathcal{S}(\eta)
\nonumber
\end{equation}
and the function $\varphi_{\mu}:(a,b)\to\re$ defined by
\begin{equation}
\varphi_{\mu}(x):=x-\frac{u(x)}{\mu}
\qquad
\forall x\in(a,b).
\label{defn:phi-mu}
\end{equation}

Since $\ap u'(x)=0$ for almost every $x\in(a,b)$, the function $\varphi_{\mu}(x)$ is approximately differentiable at  almost every $x\in(a,b)$, and
\begin{equation}
\ap \varphi_{\mu}'(x)=1
\qquad
\text{for almost every }x\in(a,b),
\nonumber
\end{equation}
so that in particular $\varphi_{\mu}$ satisfies the assumption of Theorem~\ref{thm:loc-vs-glob}, both with domain $(a,b)$ and with domain $A_{\mu}$. We observe also that
\begin{equation}
Z(u,(a,b),[\mu-\delta,\mu])=
Z(\varphi_{\mu},(a,b),[0,\delta/\mu])\supseteq
Z(\varphi_{\mu},A_{\mu},[0,\delta/\mu]),
\nonumber
\end{equation}
and therefore from Theorem~\ref{thm:loc-vs-glob} we deduce that
\begin{eqnarray}
\liminf_{\delta\to 0^{+}}\frac{1}{\delta}
\iint_{Z(u,(a,b),[\mu-\delta,\mu])}\frac{1}{y-x}\,dx\,dy & \geq & 
\liminf_{\delta\to 0^{+}}\frac{1}{\mu}\frac{\mu}{\delta}
\iint_{Z(\varphi_{\mu},A_{\mu},[0,\delta/\mu])}\frac{1}{y-x}\,dx\,dy
\nonumber
\\[1ex]
& \geq &
\frac{1}{\mu}\left(\Leb(A_{\mu})-\Leb_{*}(\varphi_{\mu}(A_{\mu}))\strut\right).
\label{est:Leb-Leb}
\end{eqnarray}

It remains to estimate $\Leb(A_{\mu})$ and $\Leb_{*}(\varphi_{\mu}(A_{\mu}))$.

Since $\mu>\mu_{0}$, we obtain that $\eta<\eta_{0}$ and therefore from (\ref{defn:eta-0}) and (\ref{defn:eta}) we deduce that $\Leb(\mathcal{S}(\eta))\leq J/\mu$, and therefore
\begin{equation}
\Leb(A_{\mu})=(b-a)-\Leb(\mathcal{S}(\eta))\geq (b-a)-\frac{J}{\mu}.
\label{est:Leb(A)}
\end{equation}

Now we claim that
\begin{equation}
a+\frac{J}{\mu}\leq\varphi_{\mu}(x)\leq b-\frac{J}{\mu}
\qquad
\forall x\in A_{\mu}.
\label{est:phi-mu}
\end{equation}

If we prove this claim, then we deduce that
\begin{equation}
\Leb_{*}(\varphi_{\mu}(A_{\mu}))\leq (b-a)-\frac{2J}{\mu},
\label{est:Leb(phi(A))}
\end{equation}
and plugging (\ref{est:Leb(A)}) and (\ref{est:Leb(phi(A))}) into (\ref{est:Leb-Leb}) we obtain (\ref{th:c0-mu0}) with $c_{0}=J$.

So it remains to prove (\ref{est:phi-mu}). Let us consider the estimate from above (the other one is symmetric). If $x\in(a,b-\eta]$, then from (\ref{defn:eta}) we obtain that
\begin{equation}
\varphi_{\mu}(x)\leq
b-\eta+\frac{M}{\mu}=
b-\frac{M+J}{\mu}+\frac{M}{\mu}=
b-\frac{J}{\mu}.
\nonumber
\end{equation}

If $x\in(b-\eta,b)\cap A_{\mu}$, then $u(x)\geq J$ (because we removed the points of $\mathcal{S}(\eta)$), and therefore
\begin{equation}
\varphi_{\mu}(x)\leq 
b-\frac{u(x)}{\mu}\leq
b-\frac{J}{\mu}.
\nonumber
\end{equation}

This completes the proof.
\hfill
$\Box$

\begin{rmk}\label{rmk:sigma-eta}
\begin{em}

The definition of $A_{\mu}$ and the estimate of its Lebesgue measure is the technical point where we need the assumption that $u\in L^{\infty}((a,b))$. If we assume that $u$ is continuous at the endpoints, or at least that $\mathcal{S}(\eta_{0})$ is empty for some fixed $\eta_{0}>0$, then we can relax the assumption to $u\in L^{1}((a,b))$. Indeed, in this case we can define
\begin{equation}
A_{\mu}:=\{x\in(a,b):|u(x)|\leq\eta_{0}\mu-J\},
\nonumber
\end{equation}
and obtain again that (\ref{est:phi-mu}) holds true, so that again also (\ref{est:Leb(phi(A))}) holds true. In addition, if $u\in L^{1}((a,b))$ we know that
\begin{equation}
\Leb(A_{\mu})=(b-a)-o\left(\frac{1}{\mu}\right)
\nonumber
\end{equation}
as $\mu\to +\infty$, and hence also (\ref{est:Leb(A)}) is satisfied when $\mu$ is large enough. This is enough to apply Theorem~\ref{thm:loc-vs-glob} in order to deduce (\ref{th:c0-mu0}).

Roughly speaking, the definition of $A_{\mu}$ has to satisfy two opposing needs. On the one hand, we need (\ref{est:phi-mu}) to be true, and this forces us to remove from $(a,b)$ the points where $|u(x)|$ is ``too large'', and the points near the boundary where the values of $u(x)$ are not close enough to the expected limits. On the other hand, we need (\ref{est:Leb(A)}) to be true, which forces us to remove as few points as possible. Finding the correct balance between the two needs seems to be a challenging problem (see Open problem~\ref{open:2L1}).

\end{em}
\end{rmk}


\subsection{Proof of Corollaries}\label{sec:cor}

\paragraph{Proof of Corollary~\ref{cor:BV}}

As in the proof of Theorem~\ref{thm:AD} we reduce ourselves to the case where $\Omega$ is a ball in $\re^{d}$, and then we consider the one-dimensional sections $u_{v,z}(t)$ defined in (\ref{defn:1D-sections}). If $u$ is a function with bounded variation, it is well-known that, for every $v\in S^{d-1}$, it turns out that for almost every $z\in P_{v}$ the one-dimensional section $u_{v,z}(t)$ is a bounded variation function of one variable, and in particular it is essentially bounded (with a bound that depends on $v$ and $z$) and differentiable (not only in the approximate sense) almost everywhere. Thus from Theorem~\ref{thm:AD} we deduce that almost all sections are essentially constant, and hence also $u$ is essentially constant.
\qed

\paragraph{Proof of Corollary~\ref{cor:limit}}

Again we reduce ourselves to prove the result in dimension one for functions defined in some interval $(a,b)\subseteq\re$.

We observe that assumption (\ref{hp:omega-liminf>0}) implies in particular that
\begin{equation}
\inf\{\omega(\mu):\mu\geq\mu_{1}\}>0
\qquad
\forall\mu_{1}>0,
\nonumber
\end{equation}
and this is enough to extend Theorem~\ref{thm:u'>0} (with an analogous proof) to the case where $\mu_{2}=+\infty$. We deduce that, if $\Fom(u,(a,b))<+\infty$, then necessarily $\ap u'(x)=0$ for almost every $x\in(a,b)$, and we conclude thanks to Theorem~\ref{thm:AD}.
\qed

\paragraph{Proof of Corollary~\ref{cor:monotone}}

The monotonicity of $\omega$ implies that a truncation of $u$ does not increase $\Fom(u,\Omega)$ and therefore, if there exists a non-constant function $u$ such that $\Fom(u,\Omega)$ is finite, then there exists also a non-constant \emph{bounded} function $u$ that still satisfies $\Fom(u,\Omega)<+\infty$. On the other hand, the monotonicity of $\omega$ implies also that $\omega$ satisfies (\ref{hp:omega-liminf>0}). At this point the conclusion follows from Corollary~\ref{cor:limit}. 
\qed



\setcounter{equation}{0}
\section{Open problems and further perspectives}\label{sec:further}

In this final section we collect some of the problems that remain open. For the sake of simplicity, we state them for functions of one real variable, but we recall that the argument of section~\ref{sec:sectioning} shows that the problem is essentially one-dimensional.

Now we know that the answer to Question~\ref{open:ignat} is negative in its full generality, but positive under additional assumptions on $u$ and/or $\omega$ (see Table~\ref{table:SoA}). In both cases the threshold between positive/negative answers remains unclear. Concerning the assumption on $\omega$, in our counterexample $\omega(\mu)$ grows at infinity more that any power of $\log\mu$, but less than every power of $\mu$. On the other hand, we know from~\cite[Theorem~1.3]{2005-ignat} that a linear growth of $\omega(\mu)$ is enough for the validity of implication (\ref{implication}). So it could be interesting to investigate the gap between logarithmic and linear growth, provided of course that we keep the necessary integral condition (\ref{hp:omega-int}).

\begin{newopen}[Growth conditions on $\omega$]

Let $\omega:[0,+\infty)\to[0,+\infty)$ be a continuous function that satisfies (\ref{hp:omega>0}) and (\ref{hp:omega-int}), and in addition
\begin{equation}
\liminf_{\mu\to 0^{+}}\frac{\omega(\mu)}{\mu^{\theta}}>0
\nonumber
\end{equation}
for some $\theta\in(0,1)$. Can we conclude that $\Fom(u,(a,b))=+\infty$ for every non-constant measurable function $u:(a,b)\to \re$? 

\end{newopen}

Concerning the assumptions on $u$, we observe that the function $u$ in our counterexample is not very summable, in the sense that
\begin{equation}
\int_{0}^{1}u(x)^{\alpha}\,dx=+\infty
\qquad
\forall\alpha>0.
\label{hp:int-u}
\end{equation}

So it is reasonable to ask whether some kind of summability of $u$ can guarantee the validity of implication (\ref{implication}).

\begin{newopen}[Summability assumptions on $u$]\label{open:summability}

Let $\omega:[0,+\infty)\to[0,+\infty)$ be a continuous function that satisfies (\ref{hp:omega>0}) and (\ref{hp:omega-int}). Let us assume that the integral in (\ref{hp:int-u}) converges for some $\alpha>0$.

If $u$ is non-constant, can we deduce that $\Fom(u,(a,b))=+\infty$? 

\end{newopen}

Of course one could consider also summability conditions on $u$ that are more general than convergence of integrals, and stated for example in terms of decay properties, as $M\to +\infty$, of the measure of the set where $|u(x)|\geq M$. As far as we know, it is also conceivable that the ``optimal theory'' involves some sort of compensation between the growth of $\omega$ and the summability of $u$.

Our positive result in Theorem~\ref{thm:AD} relies on the assumption that $u$ is both bounded and approximately differentiable. Therefore, it could be interesting to investigate the intermediate stages between Theorem~\ref{thm:AD} and Open problem~\ref{open:summability} where we consider only one of the two assumptions. Concerning boundedness, we have already observed in Remark~\ref{rmk:sigma-eta} that this assumption comes into play just in a technical point. This leads us to suspect that it can be weakened (but not too much, because of our counterexample), as in the following question. 

\begin{newopen}[Unbounded functions that admit an approximate derivative]\label{open:2L1}

Let $\omega:[0,+\infty)\to[0,+\infty)$ be a continuous function that satisfies (\ref{hp:omega>0}) and (\ref{hp:omega-int}). Let $u:(a,b)\to\re$ be a non-constant function such that
\begin{itemize}

\item  its approximate derivative exists and vanishes for almost every $x\in(a,b)$,

\item  the integral (\ref{hp:int-u}) converges for some $\alpha>0$.

\end{itemize}

Can we conclude that that $\Fom(u,(a,b))=+\infty$?
\end{newopen}

We recall that the approximate differentiability of $u$ follows from the finiteness of the functional when $\omega$ satisfies (\ref{hp:omega-liminf}) (see the proof of Corollary~\ref{cor:limit}), and therefore Open problem~\ref{open:2L1} generalizes the rightmost question marks in Table~\ref{table:SoA}.

The other possible intermediate stage between Theorem~\ref{thm:AD} and Open problem~\ref{open:summability} is the case where we keep the boundedness assumption on $u$, and if needed we reinforce it by asking also continuity or H\"older continuity (as already suggested in~\cite{2005-ignat}), but we remove the assumption of approximate differentiability. A possible approach to this case could be extending our Theorem~\ref{thm:loc-vs-glob} to less regular functions. Indeed, we suspect that (\ref{th:loc-vs-glob}) could be true even if in assumption (\ref{hp:loc-exp}) we replace the approximate derivative with difference quotients.

\begin{newopen}[Local expansion vs global contraction revisited]\label{open:loc-vs-glob}

Let $A\subseteq\re$ be a bounded measurable set, and let $\varphi:A\to\re$ be a measurable function such that
\begin{equation}
\ap\liminf_{h\to 0^{+}}\left|\frac{\varphi(x+h)-\varphi(x)}{h}\right|\geq 1
\qquad
\text{for almost every }x\in A.
\label{hp:loc-exp-open}
\end{equation}

Can we conclude that inequality (\ref{th:loc-vs-glob}) still holds true?

\end{newopen}

\begin{rmk}
\begin{em}

A positive answer to Open problem~\ref{open:loc-vs-glob} would imply a positive answer to Question~\ref{open:ignat} under the sole assumption that $u$ is in $L^{\infty}$, without any differentiability requirement. The proof would be analogous to the one presented here in section~\ref{sec:proof-AD}. Indeed, either $u$ satisfies the assumptions of Theorem~\ref{thm:u'>0}, or the function $\varphi_{\mu}$ defined by (\ref{defn:phi-mu}) satisfies (\ref{hp:loc-exp-open}) for every $\mu>0$: if this is enough to deduce (\ref{th:loc-vs-glob}), then we can conclude as in section~\ref{sec:proof-BV}.

We note also that it would be enough to obtain a weaker version of (\ref{th:loc-vs-glob}), where in the left-hand side the liminf is replaced by the limsup, and the right-hand side is multiplied by a positive constant $c_{0}$, provided that $c_{0}$ is independent of $\varphi$ and $A$.

\end{em}
\end{rmk}

\begin{rmk}
\begin{em}

On the contrary, the existence of some exotic example that provides a negative answer to Open problem~\ref{open:loc-vs-glob} would not imply immediately a negative answer to Question~\ref{open:ignat} in the $L^{\infty}$ case. Indeed, any counterexample $u\in L^{\infty}((a,b))$ to Question~\ref{open:ignat} has necessarily the property that the function $\varphi_{\mu}$ defined by (\ref{defn:phi-mu}) provides a negative answer to Open problem~\ref{open:loc-vs-glob} for every $\mu$ in some subset $M\subseteq(0,+\infty)$ that is large enough so that
\begin{equation}
\int_{(0,+\infty)\setminus M}\frac{\omega(\mu)}{\mu^{2}}\,d\mu<+\infty.
\nonumber
\end{equation}

In other words, an exotic isolated counterexample to Open problem~\ref{open:loc-vs-glob} is not enough, but we need a whole family of counterexamples with a very special structure.
\end{em}
\end{rmk}

The heuristic idea behind Theorem~\ref{thm:loc-vs-glob} is that the left-hand side of (\ref{th:loc-vs-glob}) represents some sort of quantification of the ``lack of injectivity'' of $\varphi$, and the intuition suggests that it is impossible to map a set into a smaller set through a map $\varphi$ that has the local expansion property (\ref{hp:loc-exp}) without violating injectivity. The formalization of this ideas lies in the properties of the cumulative distribution, which in turn require to understand how the function $\varphi$ transforms the measure. If we assume some form of differentiability of $\varphi$, this can be done by applying some version of the area formula, but when assumptions concern only the difference quotients the situation is less clear. Just to give an extreme example, for the time being we are not able to answer the following apparently simpler question without assuming some approximate differentiability.

\begin{newopen}[Can an injective ``local expansion'' shrink the measure?]

Determine whether there exists a measurable function $\varphi:(0,2)\to(0,1)$ that is injective and satisfies
\begin{equation}
\ap\liminf_{h\to 0^{+}}\left|\frac{\varphi(x+h)-\varphi(x)}{h}\right|\geq 1
\qquad
\text{for almost every }x\in(0,2).
\label{loc-exp-open}
\end{equation}

\end{newopen}

We suspect that the answer to the previous question is negative, but we have no proof even if we reinforce (\ref{loc-exp-open}) by asking that the approximate limit is $+\infty$.

\bigskip



\subsubsection*{\centering Acknowledgments}

The first author was introduced to this problem by some personal discussions with H.~Brezis and R.~Ignat that took place during and after the workshop ``New Perspectives in Nonlinear PDE'' held in Haifa (Israel) in June 2019. For this reason, we are both grateful to them and to the organizers of the workshop. We would like to thank also L.~Ambrosio for suggesting to investigate the monotone case that led to Corollary~\ref{cor:monotone}.

The first author is a member of the \selectlanguage{italian} ``Gruppo Nazionale per l'Analisi Matematica, la Probabilità e le loro Applicazioni'' (GNAMPA) of the ``Istituto Nazionale di Alta Matematica'' (INdAM). 

\selectlanguage{english}



\label{NumeroPagine}

\end{document}